\def\theequation{\thesection.\@arabic \c@equation}
\def\@citecolor{blue}
\def\@linkcolor{blue}
\def\@urlcolor{blue}
\def\theenumi{\@alph\c@enumi}
\theoremstyle{plain}
\newtheorem{theorem}[equation]{Theorem}
\newtheorem{lemma}[equation]{Lemma}
\newtheorem{proposition}[equation]{Proposition}
\theoremstyle{definition}
\newtheorem{remark}[equation]{Remark}
\newenvironment{remarkbox}[1][]{%
    \begin{remark}[#1] \pushQED{\qed}}{\popQED \end{remark}}
\newtheorem{example}[equation]{Example}
\newenvironment{examplebox}[1][]{%
    \begin{example}[#1] \pushQED{\qed}}{\popQED \end{example}}
\newtheorem{definition}[equation]{Definition}
\newtheorem{notation}[equation]{Notation}
\newtheorem{discussion}[equation]{Discussion}
\newenvironment{discussionbox}[1][]{%
    \begin{discussion}[#1]\pushQED{\qed}}{\popQED \end{discussion}}
\newtheorem{observation}[equation]{Observation}
\newenvironment{observationbox}[1][]{%
    \begin{observation}[#1]\pushQED{\qed}}{\popQED \end{observation}}
\newcommand{\fraka}{{\mathfrak a}}
\newcommand{\calF}{\mathcal F}
\newcommand{\calI}{\mathcal I}
\newcommand{\frakM}{{\mathfrak M}}
\newcommand{\frakm}{{\mathfrak m}}
\newcommand{\frakN}{{\mathfrak N}}
\newcommand{\frakn}{{\mathfrak n}}
 \let\strSh\calO
\newcommand{\frakQ}{{\mathfrak Q}}
\newcommand{\naturals}{\mathbb{N}}
\newcommand{\ints}{\mathbb{Z}}
\def\to{\longrightarrow}
\newcommand{\AssGr}{\mathrm{gr}}
\newcommand{\Rees}{\mathscr{R}}
\DeclareMathOperator{\height}{ht}
\DeclareMathOperator{\Hom}{Hom}
\DeclareMathOperator{\Spec}{Spec}
\DeclareMathOperator{\Proj}{Proj}
\DeclareMathOperator{\homology}{H}
\newcommand{\define}[1]{\emph{#1}}
\newcommand{\minus}{\ensuremath{\smallsetminus}}
\DeclareMathOperator{\image}{Im}
\DeclareMathOperator{\Cech}{\check C}
\newcommand{\tCech}{\v Cech\,}
\DeclareMathOperator{\Ann}{Ann}
\DeclareMathOperator{\Ext}{Ext}
\def\RDerChar{\mathbf{R}}
\def\RDer{\@ifnextchar[{\R@Der}{\ensuremath{\RDerChar}}}
\def\R@Der[#1]{\ensuremath{\RDerChar^{#1}}}
\begin{document}

\title{$F$-rationality of Rees Algebras}

\author{Mitra Koley}
\address{Chennai Mathematical Institute, Siruseri, Tamilnadu 603103. India}
\curraddr{Tata Institute of Fundamental Research, Mumbai, Maharashtra
400005. India}
\email{mitra@cmi.ac.in}
\author{Manoj Kummini}
\address{Chennai Mathematical Institute, Siruseri, Tamilnadu 603103. India}
\email{mkummini@cmi.ac.in}

\thanks{Both the authors were partially supported by a grant from the Infosys Foundation.}

\keywords{Rees algebras, $F$-rationality, rational singularity}
\subjclass[2010]{Primary: 13A30,  13A35}

\dedicatory{To Prof. Craig Huneke}
\maketitle

\section{Results}

Let $R$ be a noetherian ring and $I$ an $R$-ideal.
The \define{Rees algebra} of $R$ with respect to $I$ is 
$\Rees_{R}(I) := \oplus_{n \geq 0} I^n$;
the \define{extended Rees algebra} of $R$ with respect to $I$ is 
$\Rees'_{R}(I) := \oplus_{n \in \ints} I^n$, where, for $n \leq 0$ 
$I^n := R$.

Several authors have studied the singularities of Rees algebras:
E.~Hyry~\cite{HyryBlowupRingsRationalSings1999} for rational singularties
in characteristic zero;
A.~K.~Singh~\cite{SinghMultiSymbReesAndStrongFrationality2000}
in prime characteristic for strong $F$-regularity and $F$-purity; and 
N.~Hara,
K.-i.~Watanabe and K.-i.~Yoshida~\cite{HaraWatanabeYoshidaFrationality2002}
and\cite{HaraWatanabeYoshidaReesAlgFregular02}
in prime characteristic for $F$-rationality and $F$-regularity 
respectively.

In this article, we study the $F$-rationality of  
$\Rees_{R}(I)$ and $\Rees'_{R}(I)$. Our primary aim is to understand some
questions of N.~Hara, K.-i.~Watanabe and 
K.-i.~Yoshida~\cite[Section~1]{HaraWatanabeYoshidaFrationality2002}, which
ask for necessary and sufficient conditions for Rees algebras to be
$F$-rational. We list our results below, postponing definitions to
Section~\ref{section:prelims}.
\begin{theorem}
\label{theorem:extReesAndRees}
Let $(R, \frakm)$ be an excellent local
domain of prime characteristic and $I$ an $\frakm$-primary
$R$-ideal. Then the following are equivalent:
\begin{asparaenum}
\item 
\label{theorem:extReesAndRees:ExtReesFratl}
$\Rees'_{R}(I)$ is $F$-rational;
\item
\label{theorem:extReesAndRees:ReesFratl}
$R$ and $\Rees_R(I)$ are $F$-rational.
\end{asparaenum}
\end{theorem}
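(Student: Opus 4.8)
Throughout set $A=\Rees_R(I)$ and $A'=\Rees'_R(I)$, let $u=t^{-1}\in[A']_{-1}$, and let $G=\AssGr_I(R)$. Because $R$ is a domain, $A'$ is a $\ints$-graded domain with a unique homogeneous maximal ideal $\frakN$, and I record the three structural facts I will use: $u$ is a nonzerodivisor with $A'/uA'=G$; the localization $A'_u$ equals $R[t,t^{-1}]$; and $A=A'_{\ge 0}$, with $A_{at}=A'_{at}$ for every nonzero $a\in I$, so that $A$ and $A'$ define the same blowup $\Proj A=\Proj A'$ and agree away from the vertex. Writing $d=\dim R$ we have $\dim A=\dim A'=d+1$ and $\dim G=d$, and since for a graded ring it suffices to test $F$-rationality at the homogeneous maximal ideal, the theorem is really a statement about the vertices $\frakN$, $\frakM=\frakm\oplus A_+$, and $\frakm$. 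The ambient tools are that $F$-rationality is stable under localization and holds for $R$ if and only if it holds for $R[t,t^{-1}]$, that $F$-rational rings are Cohen--Macaulay, the elementary deformation ``$S/xS$ Cohen--Macaulay implies $S$ Cohen--Macaulay for a nonzerodivisor $x$'', and a Trung--Ikeda-type criterion relating Cohen--Macaulayness of $A$ to Cohen--Macaulayness of $G$, the inequality $a(G)<0$, and Cohen--Macaulayness of $R$.

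The first reduction is immediate. If $A'$ is $F$-rational then so is its localization $A'_u=R[t,t^{-1}]$, whence $R$ is $F$-rational; this settles the passage to $R$ in $(1)\Rightarrow(2)$ and is reused in $(2)\Rightarrow(1)$. For Cohen--Macaulayness, $A'/uA'=G$ with $u$ a nonzerodivisor gives, unconditionally, that $A'$ is Cohen--Macaulay if and only if $G$ is Cohen--Macaulay; together with the Trung--Ikeda criterion and with $R$ Cohen--Macaulay this matches the Cohen--Macaulay hypotheses across the equivalence, reducing the theorem to the statement about the Frobenius action on top local cohomology.

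The heart of the argument is a comparison of the $\ints$-graded top local cohomology modules $\homology^{d+1}_{\frakN}(A')$, $\homology^{d+1}_{\frakM}(A)$, and $\homology^{d}_{\frakm}(R)$, each carrying its natural Frobenius action, under which $F$-rationality means the absence of a proper nonzero stable submodule. I would feed $0\to A'(1)\xrightarrow{u}A'\to G\to 0$ into local cohomology to obtain, once $A'$ is Cohen--Macaulay, the sequence $0\to \homology^{d}_{\frakN}(G)\to \homology^{d+1}_{\frakN}(A')(1)\xrightarrow{u}\homology^{d+1}_{\frakN}(A')\to 0$, exhibiting $u$ as a surjective Frobenius-compatible operator on the (Artinian) module $\homology^{d+1}_{\frakN}(A')$ with kernel $\homology^{d}_{\frakN}(G)$; and I would feed $0\to A_+\to A\to R\to 0$ and $0\to A_+(1)\to A\to G\to 0$ into local cohomology over $A$ to express $\homology^{d}_{\frakN}(G)$ in terms of $\homology^{d+1}_{\frakM}(A)$ and $\homology^{d}_{\frakm}(R)$, compatibly with Frobenius and with the grading. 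Tracking these identifications degree by degree, the negative- and nonnegative-degree strands of $\homology^{d+1}_{\frakN}(A')$ separate the contribution of $R$ from that of $A$, and one reads off that $0$ is tightly closed in $\homology^{d+1}_{\frakN}(A')$ exactly when it is tightly closed simultaneously in $\homology^{d}_{\frakm}(R)$ and in $\homology^{d+1}_{\frakM}(A)$. The same bookkeeping shows $a(G)<0$ whenever $A'$ is $F$-rational, which is the remaining ingredient needed to conclude (via the Trung--Ikeda criterion) that $A$ is Cohen--Macaulay in $(1)\Rightarrow(2)$; running the comparison in the opposite direction, the $F$-rationality of $R$ and $A$ feeds back into that of $A'$. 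Thus both implications come out of the one comparison.

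The main obstacle is exactly this last step. Neither $A$ nor $R$ is obtained from $A'$ by killing a single nonzerodivisor -- the natural nonzerodivisor $u$ produces $G$, and $F$-rationality does not descend to $G$ -- so the deformation theorem cannot be applied directly and one must instead control the entire graded Frobenius module $\homology^{d+1}_{\frakN}(A')$. The delicate points are to verify that the connecting maps in the long exact sequences are Frobenius-equivariant, that the grading genuinely partitions the stable submodules of $\homology^{d+1}_{\frakN}(A')$ into an $R$-part and an $A$-part so that no ``mixed'' stable submodule obstructs the splitting of the condition, and that the surjectivity of $u$ together with the Artinian structure pins down the module from its graded pieces. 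Once these are in place, both implications follow at once.
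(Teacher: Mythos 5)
Your overall architecture --- reduce to the graded vertices, obtain the $F$-rationality of $R$ by inverting $t^{-1}$ in $\Rees'_R(I)$, and then compare the tight closures of zero in the top graded local cohomology modules degree by degree --- is the same as the paper's, and several of your observations (that $\Rees'_{t^{-1}}\simeq R[t,t^{-1}]$, that $\Rees$ and $\Rees'$ agree away from the vertex, that deformation along the nonzerodivisor $t^{-1}$ is unavailable because it produces $G$ rather than $R$ or $\Rees$) are correct and do appear in the paper. The genuine gap is the step you dispose of with ``the same bookkeeping shows $a(G)<0$ whenever $A'$ is $F$-rational.'' This is exactly where the Cohen--Macaulayness of $\Rees$ must be established in the direction (1)$\Rightarrow$(2), and it is not a formal consequence of the exact sequences: via your sequence $0\to \homology^{d}_{G_+}(G)_n\to \homology^{d+1}_{\frakN}(\Rees')_{n+1}\to \homology^{d+1}_{\frakN}(\Rees')_n\to 0$, the inequality $a(G)<0$ amounts to injectivity of $t^{-1}$ on $\homology^{d+1}_{\frakN}(\Rees')$ in all degrees $\geq 1$, and nothing in the Frobenius structure forces an element of $\ker(t^{-1})$ in positive degree into $0^*$ (from $t^{-1}\xi=0$ one only gets $t^{-q}F^e(\xi)=0$, not $cF^e(\xi)=0$ for a fixed $c$; and the usual ``$a$-invariant is negative for $F$-rational graded rings'' argument fails here because $\homology^{d+1}_{\frakN}(\Rees')_n\neq 0$ for $n\gg 0$). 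The paper obtains this from outside the bookkeeping: $F$-rationality of $R$ gives pseudo-rationality by Smith's theorem, and Lipman's theorem on Cohen--Macaulayness of graded algebras then yields that $\Rees$ is Cohen--Macaulay once $G$ is. Some such external input seems unavoidable; without it your comparison cannot even be set up, since identifying the nonnegative-degree part of $\homology^{d+1}_{\frakN}(\Rees')$ with copies of $\homology^{d}_{\frakm}(R)$ (the step that lets you ``read off'' the $R$-part of the tight closure) requires $\homology^{d}_{\Rees_+}(\Rees)_n=0$ for all $n\geq 0$, which is precisely the Cohen--Macaulayness of $\Rees$.

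A secondary gap: you assert that $0^*_{\homology^{d+1}_{\frakN}(\Rees')}=0$ holds if and only if it holds in $\homology^{d}_{\frakm}(R)$ and in $\homology^{d+1}_{\frakM}(\Rees)$ separately, and you list the needed verifications as ``delicate points'' without carrying them out. Two of them are substantive. Frobenius does not commute with your long exact sequence on the nose: on the terms in the source of multiplication by $t^{-1}$ it acts as $t^{-(p-1)}F$, and the degree shift $j\mapsto pj$ means the iterated-extension description of $\homology^{d+1}_{\frakN}(\Rees')$ by the pieces $\homology^{d}_{G_+}(G)_n$ does not interact with Frobenius degreewise in the way a splitting argument needs. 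The paper sidesteps this with a different comparison: it realizes $\homology^{d+1}_{\frakM}(\Rees)$ and $\homology^{d+1}_{\frakN}(\Rees')$ as quotients of submodules of $\bigoplus_{n}\homology^{d}_{\frakm}(R)t^n$ via explicit \tCech cycles, making the Frobenius comparison term-by-term transparent, and it separately arranges a common parameter test element in $R$ for $R$, $\Rees$ and $\Rees'$ so that the relations $cF^e(\xi)=0$ transfer between the three modules. You would need either to supply these verifications for your route or to restructure the comparison along those lines.
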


This settles~\cite[Conjecture~4.1]{HaraWatanabeYoshidaFrationality2002}
which asserted the conclusion of the above theorem. That the
$F$-rationality of $\Rees_R(I)$ implies the $F$-rationality of
$\Rees'_R(I)$ is~\cite[Theorem~4.2]{HaraWatanabeYoshidaFrationality2002},
but we give a different proof, which follows directly from some
observations on the tight closure of zero in the local cohomology modules
of  $\Rees_R(I)$ and of $\Rees'_R(I)$ and on the $F$-rationality of 
$\Proj \Rees_R(I)$ that we discuss in Section~\ref{section:rees}. 
As applications of the results of Section~\ref{section:rees}, 
we get a sufficient (but not necessary) condition for the $F$-rationality
of $R$ given the $F$-rationality of $\Rees_R(I)$
(Proposition~\ref{theorem:assGrFInj}) and 
recover~\cite[Theorem~3.1]{HaraWatanabeYoshidaFrationality2002} about 
the $F$-rationality of Rees algebras of integrally closed ideals in
two-dimensional $F$-rational rings (Theorem~\ref{theorem:dimtwo}).

Our next result partially
answers~\cite[Question~3.7]{HaraWatanabeYoshidaFrationality2002}, which
asked whether the result holds (for $\Rees_R(I)$), without any restriction on the dimension.
Since our proof uses the principalization result of V.~Cossart and
O.~Piltant~\cite[Proposition~4.2]{CossartPiltantReslI2008}, we put some
conditions on $R$.

\begin{theorem}
\label{theorem:rationalblowup}
Let $R$ be a three-dimensional finite-type domain over a 
field of prime characteristic and $\frakm$
a maximal ideal. Assume that $R$ is a rational singularity. Let $I$ be an
$\frakm$-primary ideal. 
Let $S$ be a graded $R$-algebra with 
${\Rees_{R}(I)} \subseteq S \subseteq \overline{\Rees_{R}(I)}$. 
Suppose that $\Proj S$ is
$F$-rational. Then the ring $\oplus_{n \geq 0} S_{Nn}$ is
$F$-rational for every integer $N \gg 0$. 
\end{theorem}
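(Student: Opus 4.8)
Write $X := \Proj S$ with structure morphism $\pi \colon X \to \Spec R$, and set $A := \oplus_{n \geq 0} S_{Nn}$ for an integer $N$ to be chosen large. Since $I$ is $\frakm$-primary, $\pi$ is an isomorphism over $\Spec R \setminus \{\frakm\}$, so the hypothesis that $X$ is $F$-rational already forces $A$ to be $F$-rational at every prime other than the irrelevant maximal ideal $\frakM = \frakm + A_{+}$; the plan is therefore to establish $F$-rationality of $A$ at $\frakM$. Excellent $F$-rational rings are normal and Cohen--Macaulay, so $X$ is a normal, Cohen--Macaulay, three-dimensional variety; normality of $X$ and $R$ gives $\pi_{*}\calO_{X} = \calO_{\Spec R}$, and, choosing $N$ so that $\calO_{X}(N)$ is $\pi$-ample, the inclusions $\Rees_{R}(I) \subseteq S \subseteq \overline{\Rees_{R}(I)}$ ensure $S_{m} = \homology^{0}(X,\calO_{X}(m))$ for $m \gg 0$. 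Replacing $N$ by a larger multiple, I may then identify $A$ with the section ring $\oplus_{n \geq 0}\homology^{0}(X,\calO_{X}(Nn))$ of the relatively ample bundle $\calO_{X}(N)$; in particular $\dim A = 4$, and it remains to prove that $A$ is Cohen--Macaulay and that the tight closure of zero in $\homology^{4}_{\frakM}(A)$ vanishes.

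The technical heart is the vanishing assertion that, for $N \gg 0$, $\homology^{i}(X,\calO_{X}(Nn)) = 0$ for every $i > 0$ and every $n \geq 0$; this is where the hypotheses are used and where I expect the main obstacle, since Kodaira- and Grauert--Riemenschneider-type vanishing fail in prime characteristic. I would invoke the Cossart--Piltant principalization to build a regular scheme $Y$ together with proper birational morphisms $g \colon Y \to X$ and $f = \pi \circ g \colon Y \to \Spec R$ for which $I\calO_{Y}$ is invertible. As $R$ is a rational singularity and $f$ is a resolution, $\mathbf{R}^{i}f_{*}\calO_{Y} = 0$ for $i > 0$; as $X$ is $F$-rational it has rational singularities, so the resolution $g$ satisfies $\mathbf{R}^{j}g_{*}\calO_{Y} = 0$ for $j > 0$ and $g_{*}\calO_{Y} = \calO_{X}$. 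The Leray spectral sequence for $\pi \circ g$ then collapses to yield $\mathbf{R}^{i}\pi_{*}\calO_{X} = \mathbf{R}^{i}f_{*}\calO_{Y} = 0$ for $i > 0$, and hence $\homology^{i}(X,\calO_{X}) = 0$ over the affine base, settling the case $n = 0$. The positive twists $n \geq 1$ follow from relative Serre vanishing once $\calO_{X}(N)$ is $\pi$-ample and $N$ is large. It is precisely the need to feed the rational-singularity input through a resolution that forces the three-dimensional restriction, so that Cossart--Piltant applies, and the rational-singularity hypothesis on $R$.

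With the vanishing in hand I would compute the local cohomology of $A$ via the two-step procedure $\homology^{p}_{\frakm}\bigl(\homology^{q}_{A_{+}}(A)\bigr) \Rightarrow \homology^{p+q}_{\frakM}(A)$. The section-ring identification gives $\homology^{0}_{A_{+}}(A) = \homology^{1}_{A_{+}}(A) = 0$ and $\homology^{q}_{A_{+}}(A)_{n} = \homology^{q-1}(X,\calO_{X}(Nn))$ for $q \geq 2$, which by the vanishing above are concentrated in strictly negative degrees $n$; Serre duality on the normal, Cohen--Macaulay threefold $X$ then controls these negative-degree pieces as finitely generated $R$-modules of the expected depth. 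Combining this with the Cohen--Macaulayness of $R$, the spectral sequence collapses enough to force $\homology^{i}_{\frakM}(A) = 0$ for $i < 4$, so that $A$ is Cohen--Macaulay of dimension $4$.

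Finally I would prove $0^{*}_{\homology^{4}_{\frakM}(A)} = 0$. The module $\homology^{4}_{\frakM}(A)$ is graded, its positive-degree part vanishes by the same twisted-cohomology vanishing, and its remaining graded pieces map compatibly, under the Frobenius, to the top local cohomology of the local rings of $X$. Since $X = \Proj S$ is $F$-rational, the tight closure of zero in those local cohomology modules is trivial; transporting this triviality through the graded Frobenius action on $\homology^{4}_{\frakM}(A)$ shows that no nonzero element lies in the tight closure of zero. Hence every parameter ideal of $A$ is tightly closed, $A$ is $F$-rational, and the conclusion holds for all $N \gg 0$.
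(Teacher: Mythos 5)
Your overall architecture matches the paper's: reduce everything to showing $\homology^i(X,\strSh_X)=0$ for $i>0$, then get Cohen--Macaulayness of the Veronese from Lipman's graded criterion and kill $0^*$ in the top local cohomology for $N\gg 0$ (the paper packages this last step as Proposition~\ref{proposition:veroneseOfRees}, using V\'elez's openness of the $F$-rational locus and Smith's description of the parameter test ideal to see that $0^*_{\homology^{d+1}_{\frakM}(S)}$ has finite length, so a high enough Veronese avoids its finitely many nonzero graded pieces). But there is a genuine gap at the central step. You assert that because $X$ is $F$-rational it ``has rational singularities,'' hence the resolution $g:Y\to X$ satisfies $\RDer^{j}g_*\strSh_Y=0$ for $j>0$. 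In prime characteristic this implication is not available: Smith's theorem only gives that $X$ is \emph{pseudo-rational}, i.e., injectivity of certain edge maps on local cohomology, not vanishing of higher direct images under a resolution. Indeed, the paper proves the statement ``$X$ has rational singularities'' as its closing proposition, \emph{as a consequence of} the proof of this theorem; taking it as an input is circular. If your claim held, the Leray spectral sequence would immediately give $\homology^i(X,\strSh_X)=\homology^i(Y,\strSh_Y)=0$ for $i>0$ and the theorem would be nearly immediate --- the entire difficulty of the argument lives exactly at this point.

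What the paper does instead: it builds $Y$ by starting from a desingularization of $\Spec R$ witnessing the rational singularity and principalizing $I\strSh_Y$ via Cossart--Piltant (blow-ups along regular centres do not change $\RDer\mu_*\strSh_Y$, Lemma~\ref{lemma:finiteSeqBlowupsRDer}), so that $\homology^i(Y,\strSh_Y)=\homology^i(Y,\omega_Y)=0$ for $i>0$ and $Y\to\Spec R$ factors through $X$. Then, using pseudo-rationality of $X$ and Lipman's two-dimensional results at the points $x\in X$ with $\dim\strSh_{X,x}=2$, it shows that $\RDer^ig_*\strSh_Y$ and $\RDer^ig_*\omega_Y$ have zero-dimensional support for $i\ge 1$. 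The Leray sequence then yields $\homology^1(X,\strSh_X)=\homology^1(X,\omega_X)=0$, $\RDer^2g_*=0$, and an identification $\homology^0(X,\RDer^1g_*\calF)\cong\homology^2(X,g_*\calF)$; finally Grothendieck duality for $f:X\to\Spec R$ gives $\Ext^3_R(\homology^2(X,\strSh_X),\omega_R)\cong\homology^1(X,\omega_X)=0$, whence the finite-length module $\homology^2(X,\strSh_X)$ vanishes by local duality. You would need to supply this (or an equivalent) argument to close the gap; your appeal to Serre duality on $X$ and to the twists $\strSh_X(Nn)$ does not substitute for it, since the required vanishing is for the untwisted $\homology^2(X,\strSh_X)$, which relative Serre vanishing for $N\gg 0$ does not reach.
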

This paper arose from trying to understand 
whether the results of Hyry~\cite{HyryBlowupRingsRationalSings1999} 
(who, in characteristic zero, relate the
rationality of a Rees algebra to that of the corresponding blow-up) have
counterparts for $F$-rationality.

In Section~\ref{section:prelims}, we give the definitions, some known
results needed in our proofs and some preliminary lemmas.
The subsequent sections contain the proofs of the above theorems.

\subsection*{Acknowledgements}
We thank the referee for his/her comments.

\section{Preliminaries}
\label{section:prelims}

For the remainder of this paper, unless otherwise indicated, all rings
considered are excellent, and of prime characteristic $p$. For a ring $S$,
the \define{Frobenius endomorphism}, denoted $F_S$ (or, merely $F$, if no
confusion is likely to arise), is the map $s \mapsto s^p$. When used in the
context of Frobenius endomorphisms, tight closure, $F$-rationality etc, $q$
denotes a power of $p$, and the expression ``$q \gg 0$'' is synonymous with
``$q = p^e$ for every integer $e \gg 0$.'' By $S^0$, we mean the complement
in $S$ of the union of minimal primes of $S$.
For a ring $S$, $\overline{S}$ denotes its normalization.

Throughout this paper, $(R, \frakm)$ is a $d$-dimensional ring with $d \geq
2$, $I$ an 
$\frakm$-primary ideal, admitting a
reduction $J = (f_1, \ldots, f_d)$. We write 
$\Rees := \Rees_R(I)$, $\Rees' :=
\Rees'_R(I)$, $Jt = (f_1t, \ldots, f_dt)\Rees$.
$\frakM := \frakm\Rees + \Rees_+$.
$\frakN := \sqrt{\Rees_+\Rees' + (t^{-1})\Rees'}$ and 
$X = \Proj \Rees$. Write
$E$ for the exceptional divisor of $X$, i.e., the effective Cartier divisor
defined by $I\strSh_X$.
Write $G = \AssGr_I(R)$ and $G_+$ for the $G$-ideal $\oplus_{n > 0} G_n$.
Note that $E = \Proj G$.

\subsection*{Local cohomology}
We discuss some properties of local cohomology and the Frobenius action on
it.

\begin{discussionbox}
\label{discussionbox:HomologyOfRees}
Using %
the two exact sequences
\[
0 \to \Rees_+(1) \to \Rees \to G \to 0 \quad\text{and}\quad
0 \to \Rees_+ \to \Rees \to R \to 0,
\]
and the fact that $\frakM G = \sqrt{G_+}$ and 
$\frakM R = \frakm$, one can conclude that 
$\homology^{d+1}_{\frakM}(\Rees)_n = 0$ for every $n \geq 0$ and that 
$\homology^{d+1}_{\frakM}(\Rees)_{-1} \neq 0$; see,
e.g.,~\cite[Lemma~3.3, p.~87]{GotoNishidaFiltrGorRees1994}. A similar
conclusion holds also for a graded $R$-algebra with $\Rees \subseteq S
\subseteq \overline{\Rees}$.

Note that $\homology^i_{\Rees_+}(R) = 0$ for every $i>0$. Hence 
the two exact sequences above give an exact sequence
\[
\xymatrix{
\cdots \ar[r] &  
\homology^2_{\Rees_+}(\Rees)(1) \ar[r] & 
\homology^2_{\Rees_+}(\Rees) \ar[r] & 
\homology^2_{G_+}(G) 
\ar `r `[l] `[llld] `[d] [lld]\\
& & \cdots&   
\ar `r `[l] `[llld] `[d] [lld]\\
&
\homology^d_{\Rees_+}(\Rees)(1) \ar[r] & 
\homology^d_{\Rees_+}(\Rees) \ar[r] & 
\homology^d_{G_+}(G)  \ar[r] & 0.
}
\]
For $i \geq 2$, $\homology^i_{\Rees_+}(\Rees)_j = 
\homology^{i-1}(X, (I\strSh_X)^{j})$, so 
$\homology^i_{\Rees_+}(\Rees)_j = 0 $ for every $j \gg 0$.
\end{discussionbox}

\begin{remarkbox}
\label{remarkbox:CMRees}
Suppose that $R$ is Cohen-Macaulay. Then 
$\Rees'$ is Cohen-Macaulay if and only if $G$ is Cohen-Macaulay.
Further,
$\Rees$ is Cohen-Macaulay if and
only if $G$ is Cohen-Macaulay and $\left(\homology^d_{G_+}(G)\right)_n = 0$
for every $n \geq 0$. See~\cite[Theorem~1.1]{GSrees82}, along with 
Remark~3.10 on p.~218 and the discussion surrounding (*) and (**) on
pp.~202--203 in~\cite{GSrees82}. Observe from
Discussion~\ref{discussionbox:HomologyOfRees} that
$\left(\homology^d_{G_+}(G)\right)_n = 0$
for every $n \geq 0$ if and only if
$\homology^i_{\Rees_+}(\Rees)_n = 0 $ for every $n \geq 0$.
\end{remarkbox}

\begin{remarkbox}
\label{remarkbox:cechcyclezero}
Let $(S, \frakn)$ be an $n$-dimensional Cohen-Macaulay ring and $x_1,
\ldots, x_n$ a system of parameters. Write $x = x_1\cdots x_n$. Let $a \in
S$. Then the \tCech cycle
\[
\frac{a}{x^l} 
\]
gives the zero element of $\homology^n_\frakn(S)$ if and only if $a \in
(x_1^l, x_2^l, \ldots, x_n^l)$; 
see~\cite[Proof of Theorem~2.1, pp.~104--105]{LipTeiPseudoRatlSing81}.
\end{remarkbox}

\begin{observationbox}
\label{observationbox:topCohGradedRing}
Let $S := \oplus_{n \in \naturals}S_n$ be a $d$-dimensional graded ring.
Suppose that there exists 
an $S$-regular element $x \in S_1$. Then
we
have an exact sequence
\[
\cdots \to \homology^{d-1}_{S_+}(S/(x)) \to 
\homology^{d}_{S_+}(S)(-1) \to 
\homology^{d}_{S_+}(S) \to 0.
\]
From this it follows that
for every $j \in \ints$, if 
$\left(\homology^d_{S_+}(S)\right)_j = 0$ then 
$\left(\homology^d_{S_+}(S)\right)_{j+1} = 0$.
\end{observationbox}

The Frobenius map on a ring $S$ commutes with
the localization map $S \to S_a$
for every $a \in S$. Therefore, for every sequence $a_1, \ldots, a_n \in
S$, we get a map of the \tCech complex 
$\Cech^\bullet(a_1, \ldots, a_n; S)$ and hence of the local cohomology
modules $\homology^*_{(a_1, \ldots, a_n)}(S)$. We now discuss some
properties of this action.

\begin{discussionbox}
\label{discussionbox:FrobOnIdeals}
Let $S$ be a ring, $x \in S$ a non-zero-divisor on $S$, 
and $\fraka = (a_1, \ldots, a_n)$. We have the following commutative
diagram of abelian groups, in which the rows are exact sequences of
$S$-modules:
\[
\xymatrix{
0 \ar[r] & 
S \ar[r]^{x} \ar[d]_{x^{p-1}F}&
S \ar[r] \ar[d]^F&
S/(x) \ar[r] \ar[d]^F& 0 \\
0 \ar[r] & 
S \ar[r]^{x} &
S \ar[r] &
S/(x) \ar[r] & 0 \\ }
\]
These maps commute with localization, thus giving us the following
commutative diagram (of abelian groups, in which the rows are exact
sequences of $S$-modules):
\[
\xymatrix{
0 \ar[r] & 
\Cech^\bullet(a_1, \ldots, a_n;S) \ar[r]^x \ar[d]_{x^{p-1}F}&
\Cech^\bullet(a_1, \ldots, a_n;S) \ar[r] \ar[d]^F&
\Cech^\bullet(a_1, \ldots, a_n;S/(x)) \ar[r] \ar[d]^F& 0 \\
0 \ar[r] & 
\Cech^\bullet(a_1, \ldots, a_n;S) \ar[r]^x &
\Cech^\bullet(a_1, \ldots, a_n;S) \ar[r]&
\Cech^\bullet(a_1, \ldots, a_n;S/(x)) \ar[r]& 0 \\}
\]
This gives a corresponding commutative diagram of local cohomolgy modules
$\homology^*_\fraka(-)$. We observe that even though the vertical maps are
maps of abelian groups, their kernels are $S$-modules. 
Now assume that $S$ is graded and $x$ is a homogeneous element of degree
$m$. This gives the following commutative diagram of local cohomology modules
\begin{equation}
\label{equation:FrobModNZD}
\vcenter{\vbox{%
\xymatrix@C=2ex{
\cdots \ar[r] &
\homology^{i-1}_\fraka(\frac S{(x)})_j \ar[r] \ar[d]^F&
\homology^{i}_\fraka(S)_{-m+j} \ar[r]^x \ar[d]^{x^{p-1}F}&
\homology^{i}_\fraka(S)_{j} \ar[r] \ar[d]^F&
\homology^{i}_\fraka(\frac S{(x)})_j \ar[r] \ar[d]^F&
\homology^{i+1}_\fraka(S)_{-m+j} \ar[r] \ar[d]^{x^{p-1}F}&\cdots \\
\cdots \ar[r] &
\homology^{i-1}_\fraka(\frac S{(x)})_{pj} \ar[r] &
\homology^{i}_\fraka(S)_{-m+pj} \ar[r]^x&
\homology^{i}_\fraka(S)_{pj} \ar[r]&
\homology^{i}_\fraka(\frac S{(x)})_{pj} \ar[r] &
\homology^{i+1}_\fraka(S)_{-m+pj} \ar[r] &\cdots 
}}}
\end{equation}
\end{discussionbox}

\subsection*{$F$-rationality}

Let $S$ be a ring and $M$ an $S$-module.
For a positive integer $e$, write ${}^eS$ for the $S$-module $S$ considered
through the $e$th iteration of the Frobenius ring endomorphism $S \to S$.
Write ${}^eM$ for $M \otimes_S {}^eS$. For $z \in M$, $z^q$ is the image of
$z$ under the map $M \to {}^eM$. For a submodule $N$ of $M$, $N^{[q]}_M$
denotes the $S$-submodule of ${}^eM$ generated by the image of 
${}^eN \to {}^eM$. The \define{tight closure} of $N$ in
$M$ is
\[
N^*_M := 
\left\{z \in M \mid \text{there exists}\; c \in S^0 \text{such that for
all}\;  q \gg 0, cz^q \in N^{[q]}_M\right\}.
\]
We say that $N$ is \define{tightly closed in $M$} if $N^*_M = N$. 
For an $S$-ideal $I$, we write $I^* = I^*_S$ and say that $I$ is
\define{tightly closed} if it is tightly closed in $S$.

Of particular interest to us is $0^*_{\homology^d_{\frakn}(S)}$ for a
$d$-dimensional Cohen-Macaulay 
local ring $(S, \frakn)$. Let $a_1, \ldots, a_d$ 
be a system of parameters for $S$. Then for all $b \in S$, the class of the
\tCech cycle
$\frac{b}{(a_1\cdots a_d)^l}$
belongs to $0^*_{\homology^d_{\frakn}(S)}$
if and only if there exists $c \in S^0$ such that for
all  $q \gg 0$, the class of the \tCech cycle
$\frac{cb^q}{(a_1\cdots a_d)^{lq}}$ is zero in 
$\homology^d_{\frakn}(S)$.

$F$-rationality was defined by R.~Fedder and 
Watanabe~\cite{FedderWatanabeMSRI89}.
Since we work only with equi-dimensional homomorphic images of excellent
Cohen-Macaulay rings, we will use the following definition of
$F$-rationality (see~\cite[(4.2e)~and~(6.27)]{HochsterHunekeFregSmBsch94}):
\begin{definition}
A local ring is said to be \define{$F$-rational} if the ideal generated
some system of parameters is tightly closed.
A ring $S$ is said to be \define{$F$-rational} if $S_\frakn$ is
$F$-rational for every maximal $S$-ideal $\frakn$. A scheme is
\emph{$F$-rational} if the local ring at every closed point is
$F$-rational.
\end{definition}

For the rings and the schemes that we consider in this paper, if the local
ring at every maximal ideal / closed point is $F$-rational, then all the
local rings are $F$-rational, by~\cite[(4.2f)]{HochsterHunekeFregSmBsch94}.

\begin{proposition}
\label{proposition:PolyRationalFrationality}
Let $S$ be the homomorphic image of a Cohen-Macaulay ring. 
If $S$ is $F$-rational, then $S[T]$ and 
$S[T, T^{-1}]$ are $F$-rational.
\end{proposition}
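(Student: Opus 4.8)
The plan is to establish the statement for $S[T]$ and then deduce it for $S[T,T^{-1}]$. Since $S[T,T^{-1}] = S[T]_T$ is the localization of $S[T]$ at the multiplicative set $\{T^n\}_{n\geq 0}$, and since $F$-rationality passes to localizations by~\cite[(4.2f)]{HochsterHunekeFregSmBsch94}, the Laurent case is immediate once the polynomial case is known. At the outset I would also record that an $F$-rational ring is Cohen--Macaulay and locally a normal domain, so that $S[T]$ is again Cohen--Macaulay and becomes a domain after localizing; this is what lets me use the parameter-ideal formulation of $F$-rationality throughout.

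To prove that $S[T]$ is $F$-rational it suffices to show that $(S[T])_{\frakM}$ is $F$-rational for every maximal ideal $\frakM$. Set $\mathfrak{p} = \frakM \cap S$. As $(S[T])_{\frakM}$ is a localization of $S_{\mathfrak p}[T]$ and $S_{\mathfrak p}$ is $F$-rational by~\cite[(4.2f)]{HochsterHunekeFregSmBsch94}, I may replace $S$ by the local $F$-rational ring $(A,\frakm) := S_{\mathfrak p}$ and assume that $\frakM$ lies over $\frakm$. Every such $\frakM$ has the form $\frakM = \frakm A[T] + (g)$, where $g$ reduces modulo $\frakm$ to an irreducible polynomial $\bar g$ over the residue field $k = A/\frakm$; hence $B := A[T]_{\frakM}$ is a Cohen--Macaulay local domain of dimension $d+1$, where $d = \dim A$. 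If $x_1,\ldots,x_d$ is a system of parameters for $A$, then $x_1,\ldots,x_d,g$ is a system of parameters, and therefore a regular sequence, for $B$.

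The heart of the argument is to show that the parameter ideal $(x_1,\ldots,x_d,g)B$ is tightly closed, equivalently that $0^{*}_{\homology^{d+1}_{\frakM}(B)} = 0$. Here $g$ is a nonzerodivisor, so the Frobenius-compatible diagram~\eqref{equation:FrobModNZD}, applied with the ring $A[T]$ and the element $x = g$, identifies $\homology^{d}_{\frakM}(B/(g))$ with the $g$-torsion submodule of $\homology^{d+1}_{\frakM}(B)$, compatibly with the Frobenius actions; this reduces the vanishing of $0^{*}$ over $B$ to a statement that must be transported from $A$. The clean way to carry out this transport is to observe that $A \to A[T]$ is a smooth morphism---it is flat, of finite type, and its fibers $k(\mathfrak p)[T]$ are geometrically regular over the corresponding residue fields---and then to invoke the preservation of $F$-rationality under smooth base change (see~\cite{HochsterHunekeFregSmBsch94}, and V\'elez), which gives $0^{*}_{\homology^{d+1}_{\frakM}(B)} = 0$ at once.

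The step I expect to be the main obstacle is exactly this transfer of tight-closure information from $S$ to $S[T]$. Tight closure does not in general commute with localization, so one cannot simply deduce that parameters are tightly closed in the localization $(S[T])_{\frakM}$ from a global statement over $S[T]$; this is why the reduction in the second paragraph, followed by an honest base-change input, are both needed. A related subtlety is that the residue-field extension $k \to k[T]/(\bar g)$ at the closed point $\frakM$ may be inseparable, so a direct finite-base-change argument on residue fields would run into geometric non-reducedness. This is precisely why I would phrase the key input as smoothness of the morphism $S \to S[T]$, whose fibers $k(\mathfrak p)[T]$ are genuinely geometrically regular, rather than as a base change along a possibly inseparable residue-field extension. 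A more self-contained alternative would be to prove that $B/(g)$ is $F$-rational and then appeal to the deformation of $F$-rationality along the nonzerodivisor $g$; but re-proving $F$-rationality of the flat local $A$-algebra $B/(g)$ meets the same inseparability point and is, in effect, no easier than the base-change statement itself.
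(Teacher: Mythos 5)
Your proof is correct and ultimately takes the same route as the paper: the polynomial-ring case rests on the smooth base change result of V\'elez and Hochster--Huneke (the paper simply cites \cite[Proposition~3.2]{VelezOpennessFrationalLoci1995}, which is exactly what your reduction followed by the smooth base-change invocation amounts to), and the Laurent case is handled identically, by viewing $S[T,T^{-1}]$ as a localization of $S[T]$ and using that $F$-rationality localizes for homomorphic images of Cohen--Macaulay rings \cite[(4.2f)]{HochsterHunekeFregSmBsch94}. The intermediate reductions and the sketched deformation along $g$ in your write-up are harmless but redundant once that citation is made.
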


\begin{proof}
The first statement
is~\cite[Proposition~3.2]{VelezOpennessFrationalLoci1995}.
Since $S[T, T^{-1}]$ is the homomorphic
image of a Cohen-Macaulay ring, it suffices to show that
$S[T, T^{-1}]_\frakn$ is $F$-rational for every maximal ideal $\frakn$ of 
$S[T, T^{-1}]$~\cite[Theorem~4.2]{HochsterHunekeFregSmBsch94}. 
This holds since $S[T]$ is $F$-rational.
\end{proof}

\begin{definition}
Let $S$ be an equi-dimensional local ring.
The \define{parameter test ideal} $\tau_p(S)$ of $S$ is the set 
of all the elements $c \in S$ such that $cI^* \subseteq I$ for every ideal
generated by part of a system of parameters. A \define{parameter test
element} is an element $c \in \tau_p(S) \cap S^0$.
\end{definition}

\begin{proposition}%
[\protect{\cite[Proposition~4.4]{SmithTestIdealsInLocalRings1995}}]
Let $(S, \frakn)$ be a Cohen-Macaulay local ring. Then 
$\tau_p(S) = \Ann_S\left(0^*_{\homology^{\dim S}_\frakn(S)}\right)$.
\end{proposition}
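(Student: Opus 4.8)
The plan is to prove the two inclusions separately, using throughout the description of $0^*_{\homology^{\dim S}_{\frakn}(S)}$ recalled above in terms of \tCech cycles, together with Remark~\ref{remarkbox:cechcyclezero}. Write $d = \dim S$ and fix a system of parameters $a_1, \ldots, a_d$. The two facts I will lean on are: first, for $b \in S$ and $l \geq 1$, the class of $\frac{b}{(a_1 \cdots a_d)^l}$ lies in $0^*_{\homology^d_\frakn(S)}$ precisely when $b \in (a_1^l, \ldots, a_d^l)^*$; and second, by Remark~\ref{remarkbox:cechcyclezero}, this class is \emph{zero} precisely when $b \in (a_1^l, \ldots, a_d^l)$. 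Since multiplying a cycle by $c \in S$ replaces $b$ by $cb$, the statement ``$c$ annihilates $0^*$'' translates directly into assertions about whether $cb$ lies in a parameter ideal. Note also that $\homology^d_\frakn(S)$ is intrinsic, so the first fact may be applied to any chosen system of parameters.

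For the inclusion $\tau_p(S) \subseteq \Ann_S(0^*_{\homology^d_\frakn(S)})$, let $c \in \tau_p(S)$ and let $\eta \in 0^*_{\homology^d_\frakn(S)}$. Since $\homology^d_\frakn(S)$ is the direct limit of the system $S/(a_1^l, \ldots, a_d^l)$, I may represent $\eta$ by a \tCech cycle $\frac{b}{(a_1 \cdots a_d)^l}$ with $b \in (a_1^l, \ldots, a_d^l)^*$. As $(a_1^l, \ldots, a_d^l)$ is generated by a system of parameters, the defining property of $\tau_p(S)$ gives $cb \in (a_1^l, \ldots, a_d^l)$, whence $c\eta = 0$ by Remark~\ref{remarkbox:cechcyclezero}. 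This holds for every $\eta$, so $c \in \Ann_S(0^*_{\homology^d_\frakn(S)})$.

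The reverse inclusion is where the work lies, because $\tau_p(S)$ is defined through ideals generated by \emph{part} of a system of parameters, whereas $\homology^d_\frakn(S)$ only registers full parameter ideals; bridging this mismatch is the main obstacle. Let $c \in \Ann_S(0^*_{\homology^d_\frakn(S)})$, let $a_1, \ldots, a_r$ be part of a system of parameters, extended to a full one $a_1, \ldots, a_d$, and let $b \in (a_1, \ldots, a_r)^*$. For each $t \geq 1$, monotonicity of tight closure under the inclusion $(a_1, \ldots, a_r) \subseteq (a_1, \ldots, a_r, a_{r+1}^t, \ldots, a_d^t)$ gives $b \in (a_1, \ldots, a_r, a_{r+1}^t, \ldots, a_d^t)^*$. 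The right-hand ideal is generated by the full system of parameters $a_1, \ldots, a_r, a_{r+1}^t, \ldots, a_d^t$, so the first fact (applied to this system of parameters) places the corresponding \tCech cycle of $b$ into $0^*_{\homology^d_\frakn(S)}$; applying $c$ and Remark~\ref{remarkbox:cechcyclezero} then yields $cb \in (a_1, \ldots, a_r, a_{r+1}^t, \ldots, a_d^t)$. Since each $a_j \in \frakn$ forces $a_j^t \in \frakn^t$, this ideal is contained in $(a_1, \ldots, a_r) + \frakn^t$, so letting $t \to \infty$ and invoking Krull's intersection theorem (applied in $S/(a_1, \ldots, a_r)$) gives $cb \in (a_1, \ldots, a_r)$. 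As this holds for every such $b$ and every ideal generated by part of a system of parameters, $c \in \tau_p(S)$, completing the proof.
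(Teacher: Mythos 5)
Your argument is correct, but there is no in-paper proof to compare it against: the proposition is quoted from Smith \cite[Proposition~4.4]{SmithTestIdealsInLocalRings1995}, and the paper simply cites it. What you have written is essentially Smith's own argument. The two points that carry the weight are handled properly: the dictionary between classes in $0^*_{\homology^d_\frakn(S)}$ and tight closure of full parameter ideals, which rests on the Cohen--Macaulay hypothesis through Remark~\ref{remarkbox:cechcyclezero} (equivalently, the injectivity of the maps in the direct system $S/(a_1^l,\dots,a_d^l)\to \homology^d_\frakn(S)$), and the passage from full to partial systems of parameters by enlarging $(a_1,\dots,a_r)$ to $(a_1,\dots,a_r,a_{r+1}^t,\dots,a_d^t)$ and letting $t\to\infty$ via Krull intersection. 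Your identification of the latter as ``where the work lies'' matches the actual structure of the standard proof.
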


\begin{observationbox}
\label{observationbox:finiteLengthTCOfZero}
Let $(R, \frakm)$ be an excellent 
local
domain and $S$ a domain that is finitely generated as an
$R$-algebra.  Let $\frakn$ be a maximal ideal of $S$. Suppose 
that $\Spec S \minus \{\frakn\}$ is $F$-rational.
Then, by~\cite[Theorem~3.9]{VelezOpennessFrationalLoci1995}, for every $c
\in \frakn$, there exists a positive integer $N$ such that $c^N$ is a
parameter test element. Hence $\sqrt{\tau_p(S_\frakn)} =\frakn$
or ${\tau_p(S_\frakn)} =S_\frakn$. 
We see
from the proof 
of~\cite[Proposition 4.4(ii)]{SmithTestIdealsInLocalRings1995} that 
${\tau_p(S_\frakn)} \subseteq 
\Ann_R(0^*_{\homology^{\dim S_\frakn}_{\frakn}(S_\frakn)})$;
so 
$0^*_{\homology^{\height \frakn}_{\frakn}(S)}=
0^*_{\homology^{\dim S_\frakn}_{\frakn}(S_\frakn)}$ 
is a finite-length $S$-module.
\end{observationbox}

\subsection*{Rational singularities}
In order to prove Theorem~\ref{theorem:rationalblowup}, we need to discuss
desingularization and rational singularities. We restrict our attention to
integral schemes. A scheme $X$ is said to be \define{regular} if the local
ring $\strSh_{X,x}$ is a regular local ring for every $x \in X$.

\begin{definition}
Let $X$ be an integral scheme. A \define{desingularization} of $X$ is a
proper birational morphism $Y \to X$ such that $Y$ is regular.
\end{definition}

\begin{definition}
\label{definition:rationalsing}
Let $X$ be a scheme. We say that $X$ is a \define{rational singularity} if 
there exists a desingularization $g : Y \to X$ such that 
$\strSh_X \to \RDer g_* \strSh_{Y}$ is 
a quasi-isomorphism, i.e., 
$g_*\strSh_Y = \strSh_X$ and $\RDer^ig_*\strSh_Y = 0$ for every $i > 0$.
\end{definition}

Suppose that $X$ is Cohen-Macaulay with a dualizing sheaf $\omega_X$.
Write $\omega_Y$ for the left-most non-zero cohomology for the complex
$g^!\omega_{X}$. Then $\omega_Y$ is a dualizing sheaf for $Y$. Moreover, 
$\strSh_X \to \RDer g_* \strSh_{Y}$ is 
a quasi-isomorphism if and only if 
$\RDer g_*\omega_Y \to \omega_{X}$ is a 
quasi-isomorphism~\cite[Lemma~(4.2)]{LipCMgraded94}.

In characteristic zero, it is known that if $X$ is a rational singularity,
then \emph{every} desingularization $Y \to X$ satisfies the conditions of
Definition~\ref{definition:rationalsing}. This can be proved using
`principalization of ideal sheaves in regular schemes' and vanishing of
cohomology for finite sequence of blow-ups along regular centres
(Lemma~\ref{lemma:finiteSeqBlowupsRDer}). Principalization of ideal sheaves
is not known in general in positive characteristic, in dimensions greater
than three. We use the following
result about principalization in our proof of
Theorem~\ref{theorem:rationalblowup}.

\begin{proposition}%
[\protect{\cite[Proposition~4.2]{CossartPiltantReslI2008}}]
\label{proposition:principalization}
Let $Y$ be a three-dimensional 
quasi-projective regular variety over a field and $\calI
\subseteq \strSh_Y$ an
ideal sheaf. Then there
exists a finite sequence of morphisms
\[
Y_n \to Y_{n-1} \to \cdots \to Y_1 \to Y_0 := Y
\]
such that $Y_{i+1} \to Y_i$ is the blow-up along 
a regular subscheme of $Y_i$, for each $i$, and $I\strSh_{Y_n}$ is an
invertible sheaf.
\end{proposition}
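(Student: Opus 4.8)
The plan is to realize this as an instance of embedded resolution: I want to replace the single blow-up $\mathrm{Bl}_{\calI} Y \to Y$, which already makes $\calI$ invertible but whose centre $\calI$ is not regular, by a finite composite of blow-ups along \emph{regular} centres with the same effect. Since $Y$ is regular, $\calI\strSh_{Y_n}$ is invertible exactly when it is locally principal, and after a sequence of such blow-ups the total transform of $\calI$ is locally a product of powers of the (normal-crossing) exceptional divisors together with a residual ideal; so the target is to drive this residual factor down until the total transform is locally generated by a single monomial in the exceptional coordinates. Quasi-projectivity of $Y$ enters only to make the ambient resolution machinery applicable.

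To organize the induction I would attach to $\calI$ a resolution invariant built first from the order function $x \mapsto \nu_x(\calI)$, where $\nu_x(\calI)$ is the largest $\nu$ with $\calI_x \subseteq \frakm_x^{\nu}$. This function is upper semicontinuous, so its maximal-value locus is closed; the first task is to show that, after preliminary regular-centre blow-ups, this locus can be taken to be regular, and to blow it up. The heart of the argument is to prove that under such a blow-up the maximal order does not increase, and that there is no infinite sequence in which it stays constant, so that a suitably refined, well-ordered invariant strictly drops after finitely many steps; the process then terminates by transfinite (Noetherian) induction with $\calI\strSh_{Y_n}$ invertible.

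The main obstacle, and the reason the statement is confined to dimension three and is quoted rather than reproved here, is positive characteristic. In characteristic zero one controls the order by descending induction on dimension using a smooth hypersurface of maximal contact and the associated coefficient ideal (Hironaka, Villamayor, Bierstone--Milman), but hypersurfaces of maximal contact need not exist in characteristic $p$, because the relevant differential operators interact badly with the $p$-th power map. The correct substitute is Hironaka's characteristic polyhedron together with the secondary invariants of Cossart: one must analyse the translation and vertex behaviour of the polyhedron under blow-up and rule out the persistence of maximal order caused by purely inseparable phenomena. Carrying this out is precisely the content of the cited three-dimensional resolution of Cossart and Piltant (building on Abhyankar's earlier work on $3$-folds), which supplies the required regular centres; for $\dim Y \le 3$ these difficulties are surmountable, which is exactly why we impose this bound.
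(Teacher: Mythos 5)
The paper offers no proof of this proposition: it is imported verbatim as \cite[Proposition~4.2]{CossartPiltantReslI2008} and used as a black box in the proof of Lemma~\ref{lemma:desingDominatesX}. Your text is therefore not being measured against an argument in the paper, and, read as a proof, it is not one either --- it is a (sound) survey of the order-reduction strategy for principalization, in which every decisive step is explicitly delegated back to the cited work. Concretely: the upper semicontinuity of $x \mapsto \nu_x(\calI)$ and the closedness of the maximal-order locus are fine, but you do not show that this locus can be made regular after preliminary blow-ups, you do not show that the maximal order is non-increasing under blow-up along such a centre, and you do not exhibit a well-ordered invariant that strictly drops --- and you correctly identify that in characteristic $p$ the standard maximal-contact/coefficient-ideal descent fails, so none of these can be waved through. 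What remains after removing the deferred steps is empty; the ``proof'' is exactly the citation, which is what the paper does too. That is an acceptable resolution here (the proposition is stated with the citation in its header precisely to signal that it is quoted, not proved), but you should present it as such rather than as an argument: either cite the result outright, or be aware that filling in your sketch honestly means reproducing the characteristic-polyhedron analysis of Cossart--Piltant, which is far beyond the scope of this paper. One small caution if you do keep the sketch: your opening framing via $\mathrm{Bl}_{\calI}Y$ is only motivational --- resolving the singularities of $\mathrm{Bl}_{\calI}Y$ would not by itself produce a factorization into blow-ups of \emph{regular centres in the $Y_i$}, which is the actual content of the proposition and the reason the dimension-three hypothesis is needed.
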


The following lemma is perhaps well-known, but we include a proof here
for the sake of completeness.

\begin{lemma}
\label{lemma:finiteSeqBlowupsRDer}
Let $\mu$ the composite of a sequence of morphisms
\[
Y_n \to Y_{n-1} \to \cdots \to Y_1 \to Y_0 := Y
\]
such that $Y_i$ is regular for each $i$ and the morphism $Y_{i+1} \to Y_i$
is the blow-up along a regular subscheme of $Y_i$, for each $i$. Then the
natural maps $\strSh_Y \to \RDer\mu_* \strSh_{Y_n}$ and
$\RDer{\mu}_*{\mu}^!\omega_{Y} \to \omega_{Y}$ are quasi-isomorphisms.
\end{lemma}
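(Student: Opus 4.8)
The plan is to reduce the statement to the case of a single blow-up along a regular center and then iterate. The key reduction is the following: if we can show that for each individual blow-up $\pi_{i+1} : Y_{i+1} \to Y_i$ along a regular center, the natural maps $\strSh_{Y_i} \to \RDer(\pi_{i+1})_* \strSh_{Y_{i+1}}$ and $\RDer(\pi_{i+1})_* (\pi_{i+1})^! \omega_{Y_i} \to \omega_{Y_i}$ are quasi-isomorphisms, then the statement for the composite $\mu$ follows by composing. So first I would handle the composition step, and then the single-blow-up step.

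For the composition step, write $\mu = \nu \circ \pi$ where $\pi : Y_1 \to Y_0 = Y$ is the first blow-up and $\nu : Y_n \to Y_1$ is the composite of the remaining ones; by induction on $n$ I may assume the conclusion holds for $\nu$, so $\strSh_{Y_1} \to \RDer\nu_* \strSh_{Y_n}$ is a quasi-isomorphism. The Grothendieck composition (Leray) spectral sequence gives $\RDer\mu_* = \RDer\pi_* \circ \RDer\nu_*$, so applying $\RDer\pi_*$ to the quasi-isomorphism for $\nu$ yields $\RDer\pi_*\strSh_{Y_1} \xrightarrow{\sim} \RDer\mu_*\strSh_{Y_n}$; combined with the single-blow-up statement $\strSh_{Y} \xrightarrow{\sim} \RDer\pi_*\strSh_{Y_1}$ this gives the first quasi-isomorphism. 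The dualizing-sheaf statement is handled dually, using the fact that $\mu^! = \pi^! \circ \nu^!$ together with the duality-sheaf form of the single-blow-up case; here I would invoke the equivalence recorded right after Definition~\ref{definition:rationalsing} (from~\cite[Lemma~(4.2)]{LipCMgraded94}) that $\strSh \to \RDer(\cdot)_*\strSh$ being a quasi-isomorphism is equivalent to $\RDer(\cdot)_*\omega \to \omega$ being one, which lets me package both statements together and avoid tracking the two separately.

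For the base case of a single blow-up $\pi : Y' \to Y$ along a regular subscheme $Z \subseteq Y$ with $Y$ regular, the heart of the matter is the vanishing $\RDer^i\pi_*\strSh_{Y'} = 0$ for $i > 0$ together with $\pi_*\strSh_{Y'} = \strSh_Y$. This is a local computation: the blow-up of a regular scheme along a regular subscheme of codimension $c$ has fibers that are projective spaces $\mathbb{P}^{c-1}$ over the center, and étale-locally the situation is the standard blow-up of affine space along a coordinate linear subspace. Thus the higher direct images are computed from the cohomology of $\strSh$ on projective space fibers, which vanishes in positive degrees, and $\pi_*\strSh_{Y'} = \strSh_Y$ because $Y$ is normal and $\pi$ is birational and proper (Zariski's main theorem / the normality of the regular scheme $Y$).

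\textbf{The main obstacle} I expect is the base case — specifically, pinning down the higher-direct-image vanishing for the blow-up along a regular center in a way that is clean in positive characteristic and does not secretly use resolution or characteristic-zero vanishing theorems. The fiber-dimension and local-structure argument is the right idea, but making it rigorous requires either a careful local model (reducing to the blow-up of $\mathbb{A}^n$ along a linear subspace, where the direct images can be computed explicitly from the Koszul-type presentation of the blow-up as $\Proj$ of a Rees algebra) or citing a standard reference for the cohomology of blow-ups of regular schemes. By contrast, the composition step is formal once the base case and the dualizing-complex compatibility are in hand.
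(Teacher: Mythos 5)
Your proposal is correct and follows essentially the same route as the paper: reduce the $\omega_Y$-statement to the $\strSh_Y$-statement via \cite[Lemma~(4.2)]{LipCMgraded94}, induct on $n$ by peeling off the first blow-up and using the composition (Grothendieck) spectral sequence, and handle a single blow-up along a regular centre by a local computation. For that base case the paper takes exactly the rigorous route you flag as your fallback: Zariski-locally the centre is cut out by a regular sequence $x_1,\ldots,x_c \in \frakn\minus\frakn^2$ in a regular local ring $(S,\frakn)$, and the Rees algebra $\Rees_S((x_1,\ldots,x_c))$ is Cohen--Macaulay, which yields $\homology^i(Y_1,\strSh_{Y_1})=0$ for $i>0$.
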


\begin{proof}
We first note that the two assertions are equivalent to each
other~\cite[Lemma~(4.2)]{LipCMgraded94}, so we will prove that 
$\strSh_Y \to \RDer\mu_* \strSh_{Y_n}$ is a quasi-isomorphism.
Since $Y$ is normal, the map $\strSh_Y \to \mu_* \strSh_{Y_n}$ is an
isomorphism. Hence we need to show that for every $i > 0$, 
$\RDer^i\mu_* \strSh_{Y_n} = 0$. 
We do this by induction on $n$. Assume that $n=1$.
The question is local on $Y$, so we may
assume that $Y = \Spec S$ for some regular local ring $(S, \frakn)$ and
that $Y_1$ is obtained by the blow-up along an ideal generated by a regular
sequence $x_1, \ldots, x_c \in \frakn \minus \frakn^2$. The Rees algebra
$\Rees_S((x_1, \ldots, x_c))$ is Cohen-Macaulay, so 
$\homology^i(Y_1, \strSh_{Y_1})  = 0$ for every $i > 0$.

Now assume that $n > 1$. Write 
$\nu$ for the composite morphism $Y_n \to Y_1$ and
$\nu'$ for the morphism $Y_1 \to Y$. Then 
$\nu_* \strSh_{Y_n} = \strSh_{Y_1}$ and
$\RDer^i\nu_* \strSh_{Y_n} = 0$ for every $i > 0$. Hence the spectral
sequence (see, e.g.,~\cite[Theorem~5.8.3]{WeibHomAlg94})
\[
E_2^{i,j} = \RDer^j{\nu'}_*\RDer^i{\nu}_* \strSh_{Y_n} \implies
\RDer^{i+j}{\mu}_*\strSh_{Y_n}
\
\]
proves the lemma.
\end{proof}

\subsection*{Pseudo-rational rings}

Lipman~\cite[Section~1a, p.~156]{LipmanDesingTwoDim1978} in dimension
two and Lipman and 
Teissier~\cite[Section~2, p.~102]{LipTeiPseudoRatlSing81} 
more generally defined pseudo-rational rings. Again, as we did with the
definition of $F$-rationality, we present the definition relevant for
excellent rings.

\begin{definition}
A $d$-dimensional local ring $(S, \frakn)$ is said to be
\define{pseudo-rational} if it is Cohen-Macaulay and normal and for every
proper birational morphism $f : Y \to \Spec S$, the map
$\homology^d_{\frakn}(S) \to 
\homology^d_{f^{-1}(\{\frakn\})}(\strSh_Y)$ 
(an edge map in the Leray spectral
sequence $\homology^j_{\frakn}(\RDer^if_*\strSh_Y) \implies 
\homology^{i+j}_{f^{-1}(\{\frakn\})}(\strSh_Y)$) is injective.
\end{definition}

\begin{theorem}[\protect{\cite[Theorem~3.1]{SmithFrational1997}}]
Every excellent $F$-rational local ring is pseudo-rational.
\end{theorem}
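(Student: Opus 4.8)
The plan is to verify the three requirements in the definition of pseudo-rational one at a time: Cohen--Macaulayness, normality, and injectivity of the edge map $\homology^d_{\frakn}(S) \to \homology^d_{f^{-1}(\{\frakn\})}(\strSh_Y)$ for every proper birational $f \colon Y \to \Spec S$. The first two are the standard structural consequences of $F$-rationality for excellent local rings: an $F$-rational ring is normal, and, being an equidimensional homomorphic image of an excellent Cohen--Macaulay ring, it is Cohen--Macaulay; I would simply quote these from the tight-closure literature. The weight therefore falls entirely on the injectivity of the edge map, and the first genuine step is to restate $F$-rationality in the form equivalent to it for a Cohen--Macaulay local ring, namely $0^*_{\homology^d_{\frakn}(S)} = 0$. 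Here normality is also what guarantees $\Gamma(Y,\strSh_Y) = S$, so that the source of the edge term $\homology^d_{\frakn}(f_*\strSh_Y)$ is indeed $\homology^d_{\frakn}(S)$.

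With that reformulation in hand, write $W = f^{-1}(\{\frakn\})$ and let $K$ be the kernel of $\homology^d_{\frakn}(S) \to \homology^d_{W}(\strSh_Y)$. Since $f$ is a morphism of schemes of characteristic $p$, the Frobenius is functorial and the edge map intertwines the natural Frobenius actions on $\homology^d_{\frakn}(S)$ and on $\homology^d_{W}(\strSh_Y)$; consequently $K$ is stable under the Frobenius action on $\homology^d_{\frakn}(S)$, so that $\eta \in K$ forces $\eta^q \in K$ for all $q = p^e$. I would then aim to show $K \subseteq 0^*_{\homology^d_{\frakn}(S)}$: fixing a test element $c \in S^0$ (available because $S$ is excellent and reduced), the goal is to promote the vanishing of $\eta^q$ in $\homology^d_{W}(\strSh_Y)$ to the vanishing of $c\,\eta^q$ in $\homology^d_{\frakn}(S)$ itself for all $q$, which is exactly the tight-closure condition $\eta \in 0^*$. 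Combined with $0^*_{\homology^d_{\frakn}(S)} = 0$ this gives $K = 0$, hence injectivity.

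The step that does the real work, and the one I expect to be the main obstacle, is producing the multiplier $c$ and justifying this promotion, since it is precisely here that the geometry of the proper birational $f$ must be converted into an algebraic relation on $S$. The cleanest route is Grothendieck--Serre duality: after completing, Matlis duality turns injectivity of $\homology^d_{\frakn}(S) \to \homology^d_{W}(\strSh_Y)$ into surjectivity of the trace map $f_*\omega_Y \to \omega_S$ of dualizing modules. One then shows that the image of this trace always contains $\tau_p(S)\,\omega_S$, where $\tau_p(S) = \Ann_S\!\left(0^*_{\homology^d_{\frakn}(S)}\right)$ is the parameter test ideal; the Frobenius-and-test-element argument sketched above is what produces this containment. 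Since $0^*_{\homology^d_{\frakn}(S)} = 0$ gives $\tau_p(S) = S$, the trace is surjective, and dualizing back yields the desired injectivity. Throughout, excellence is used to guarantee test elements and the good behaviour of dualizing modules, and the hardest technical point remains the containment $\tau_p(S)\,\omega_S \subseteq \image(f_*\omega_Y \to \omega_S)$, which encodes the comparison between the Frobenius structure on $S$ and the cohomology of $\strSh_Y$.
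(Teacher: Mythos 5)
First, a point of order: the paper does not prove this statement at all --- it is quoted from Smith [Smi97, Theorem~3.1] --- so there is no internal proof to compare yours against. Judged against Smith's actual argument, your architecture is the right one: Cohen--Macaulayness and normality are standard consequences of $F$-rationality for excellent local rings, $F$-rationality is reformulated as $0^*_{\homology^d_{\frakn}(S)} = 0$, the kernel $K$ of the edge map is stable under the Frobenius action, and everything reduces to showing $K \subseteq 0^*_{\homology^d_{\frakn}(S)}$.

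The gap is exactly the step you flag as the hardest: nothing in your sketch actually produces the multiplier $c$. A parameter test element satisfies $c\,0^*_{\homology^d_{\frakn}(S)} = 0$ and $cI^* \subseteq I$ for parameter ideals $I$; it is not given to annihilate an arbitrary Frobenius-stable submodule such as $K$, and the containment $\tau_p(S)\,\omega_S \subseteq \image\bigl(f_*\omega_Y \to \omega_S\bigr)$ that you invoke is not a formal consequence of duality --- it is essentially the theorem itself restated on the Matlis-dual side, so invoking it is circular. The missing ingredient, which is the real content of Smith's proof, is the structural result [Smi97, Theorem~2.6]: for an excellent local ring, $0^*_{\homology^d_{\frakn}(S)}$ contains every \emph{proper} Frobenius-stable submodule of $\homology^d_{\frakn}(S)$; equivalently, when $S$ is $F$-rational, $\homology^d_{\frakn}(S)$ is simple as a module with Frobenius action. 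Granting that, one only needs $K \neq \homology^d_{\frakn}(S)$, i.e.\ that the edge map is nonzero, which follows along the lines you indicate: its Matlis dual is the trace $f_*\omega_Y \to \omega_S$, an injective map of nonzero torsion-free modules because it is an isomorphism at the generic point. So either prove (or cite) the simplicity theorem, or prove the trace containment directly; as written, the proposal assumes the crux.
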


\begin{remarkbox}%
\label{remarkbox:twodimpseudorational}
Let $(S, \frakn)$ be two-dimensional pseudo-rational singularity and $Y \to
\Spec S$ a proper birational morphism with $Y$ normal. Write $F$ for the
closed fiber in $Y$, i.e., inverse image of $\{\frakn\}$. Then
$\homology^1_F(Y, \strSh_Y) =
0$~\cite[Theorem~(2.4), p.~177]{LipmanDesingTwoDim1978}. Using duality with
supports~\cite[Theorem, p.~188]{LipmanDesingTwoDim1978}, we see that 
$\homology^1(Y, \omega_Y) = 0$. Additionally, $\homology^1(Y, \strSh_Y) =
0$~\cite[Examples~(a), p.~103]{LipTeiPseudoRatlSing81}. Suppose that 
$Y = \Proj \Rees_S(J)$ for some integrally closed 
$\frakn$-primary ideal $J$. We see from the proof
of~\cite[Corollary~5.4]{LipTeiPseudoRatlSing81} that
$\homology^1(Y, J^n\strSh_Y) = 0$ for every $n \geq 0$.
\end{remarkbox}

\section{Rees algebras}
\label{section:rees}

In this section, we prove some properties of the local cohomology modules
of Rees algebras and extended Rees algebras, which will be used in the
proofs of the theorems.

\begin{discussionbox}
\label{discussionbox:reesExtRees}
Assume that $R$ and $\Rees$ are Cohen-Macaulay. 
We now argue that there exists commutative diagram
\[
\xymatrix{
0 \ar[r] & 
\homology^d_{\Rees_+}(\Rees) \ar[r]^-\phi  \ar@{=}[d] &
\bigoplus\limits_{n<0} \homology^d_{\frakm}(R)t^n \ar[r] \ar@{^(->}[d] &
\homology^{d+1}_{\frakM}(\Rees) \ar[r]  \ar@{^(->}[d]^\gamma &
0 \\
0 \ar[r] & 
\homology^d_{\Rees_+}(\Rees') \ar[r]^-\psi&
\bigoplus\limits_{n \in \ints} \homology^d_{\frakm}(R)t^n \ar[r]&
\homology^{d+1}_{\frakN}(\Rees') \ar[r]  &
0
}
\]
with exact rows. We prove this as follows. The top row is
from~\cite[Lemma~2.7]{HaraWatanabeYoshidaFrationality2002}.
(Note that $\homology^{d}_{\Rees_+}(\Rees)_j=0$ for every $j\geq
0$, by Remark~\ref{remarkbox:CMRees}.)
The lower row is from the exact sequence
\[
\cdots \to
\homology^{i-1}_{\Rees_+}(\Rees'_{t^{-1}}) \to
\homology^{i}_\frakN(\Rees') \to
\homology^{i}_{\Rees_+}(\Rees') \to
\homology^{i}_{\Rees_+}(\Rees'_{t^{-1}}) \to
\homology^{i+1}_\frakN(\Rees') \to
\cdots.
\]
Since $\Rees$ and $R$ are Cohen-Macaulay, $\Rees'$ is Cohen-Macaulay.
We now describe the maps. The left-most vertical map is identity, since
$\Rees'/\Rees$ is a $\Rees_+$-torsion module and $d \geq 2$. The middle
vertical map is the natural inclusion map; the right-most vertical map
$\gamma$ is 
induced by the commutativity of the left square.

To prove the commutativity of the diagram, we need to show that the left
square is commutative. For this, we need to describe the two maps $\phi$
and $\psi$. Write $f = f_1\cdot \cdots \cdot f_d$.
Every homogeneous element of 
$\homology^d_{\Rees_+}(\Rees)$ is the 
class in cohomology of a \tCech cycle 
\[
\frac{at^n}{f^lt^{dl}}
\]
with $a \in \Rees_n$ and some $l$.
Then
\[
\phi\left(\left[\frac{at^n}{f^lt^{dl}}\right]\right) = 
\left[\frac{a}{f^l}\right]\cdot t^{n-dl}.
\]
(See~\cite[Remark, p.~166]{HaraWatanabeYoshidaFrationality2002}). On the
other hand, the map $\psi$ is the map
\[
\homology^{i}_{\Rees_+}(\Rees') \to
\homology^{i}_{\Rees_+}(\Rees'_{t^{-1}}) 
\]
which maps
\[
\left[\frac{at^n}{f^lt^{dl}}\right], \;a \in \Rees'_n
\]
to itself, with $a$ now thought of as an element of 
$(\Rees'_{t^{-1}})_n$.
Hence the left square commutes. Hence the diagram commutes. Also note that
the right-most vertical map is injective. Moreover, for each $j<0$, the
map 
$\homology^{d+1}_{\frakM}(\Rees)_j  \to
\homology^{d+1}_{\frakN}(\Rees')_j$ 
induced by $\gamma$ (i.e., $j$th graded piece of $\gamma$) 
is an isomorphism.
\end{discussionbox}

We now prove some results relating the $F$-rationality of $\Rees$, $\Rees'$
and $X$. To make sense of the following observation, recall that the map
$\gamma$ is bijective in negative degrees.

\begin{lemma}
Adopt the notation of Discussion~\ref{discussionbox:reesExtRees}. Let $0
\neq c \in R$, $e$ a positive integer and $\xi \in 
\homology^{d+1}_{\frakM}(\Rees)$ 
be such that $cF^e(\xi) = 0$. Then 
$cF^e(\gamma(\xi)) = 0$. 
Conversely, let $0
\neq c \in R$, $e$ a positive integer,
$j$ a negative integer, and $\zeta \in 
\homology^{d+1}_{\frakN}(\Rees')_j$ be such that 
$cF^e(\zeta) = 0$. Then 
$cF^e(\gamma^{-1}(\zeta)) = 0$.
\end{lemma}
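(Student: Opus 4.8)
The plan is to reduce both assertions to one structural fact: that $\gamma$ is equivariant for the natural Frobenius actions, i.e. $\gamma \circ F^e = F^e \circ \gamma$ as maps $\homology^{d+1}_{\frakM}(\Rees) \to \homology^{d+1}_{\frakN}(\Rees')$, while being $R$-linear and injective. Granting this, both statements become one-line computations. For the first, using $\Rees$-linearity of $\gamma$ (so $\gamma(c\,\eta) = c\,\gamma(\eta)$ for $c \in R$) and equivariance, $c F^e(\gamma(\xi)) = c\,\gamma(F^e(\xi)) = \gamma\bigl(c F^e(\xi)\bigr) = \gamma(0) = 0$. For the converse, since $j < 0$ the graded piece $\gamma_j$ is an isomorphism, so $\eta := \gamma^{-1}(\zeta) \in \homology^{d+1}_{\frakM}(\Rees)_j$ is defined; then $\gamma\bigl(c F^e(\eta)\bigr) = c F^e(\gamma(\eta)) = c F^e(\zeta) = 0$, and since $c F^e(\eta)$ lies in degree $p^e j < 0$, where $\gamma$ is an isomorphism and in particular injective, we conclude $c F^e(\eta) = 0$, which is the claim.

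The substance therefore lies in establishing the Frobenius-equivariance of $\gamma$, which I would obtain by checking that the whole commutative diagram of Discussion~\ref{discussionbox:reesExtRees} is compatible with the intrinsic Frobenius actions on the modules appearing in it. The diagram is assembled from the ring inclusion $\Rees \hookrightarrow \Rees'$, localization at $t^{-1}$, and \tCech complexes, all of which commute with Frobenius, so I only need to confirm the two left columns. The left vertical map is the identity of $\homology^d_{\Rees_+}(\Rees) = \homology^d_{\Rees_+}(\Rees')$, and it intertwines the two Frobenii because both are induced by the Frobenius of $\Rees'$ restricted along $\Rees \hookrightarrow \Rees'$. The middle vertical inclusion $\bigoplus_{n<0}\homology^d_{\frakm}(R)t^n \hookrightarrow \bigoplus_{n\in\ints}\homology^d_{\frakm}(R)t^n$ is Frobenius-equivariant because $F$ multiplies degrees by $p$ and hence carries the negatively graded submodule into itself. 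Finally, the explicit formula for $\phi$ lets me verify equivariance on \tCech representatives directly: $F$ sends the cycle $\tfrac{a t^n}{f^l t^{dl}}$ to $\tfrac{a^p t^{pn}}{f^{lp} t^{dlp}}$, and applying $\phi$ gives $[\tfrac{a^p}{f^{lp}}]\,t^{p(n-dl)}$, which is exactly $F$ applied to $\phi\bigl([\tfrac{a t^n}{f^l t^{dl}}]\bigr) = [\tfrac{a}{f^l}]\,t^{n-dl}$; the analogous check applies to $\psi$. Since the two left columns and the horizontal maps are all Frobenius-equivariant, the induced map on cokernels, namely $\gamma$, commutes with $F$, and hence with $F^e$ by iteration.

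The main obstacle I anticipate is careful bookkeeping of the Frobenius rather than any genuine difficulty: the point to be vigilant about is that in a six-term sequence such as \eqref{equation:FrobModNZD} the Frobenius is twisted by factors $x^{p-1}$ on alternate terms, so one must be sure that the action relevant to $\gamma$ on the top modules $\homology^{d+1}_{\frakM}(\Rees)$ and $\homology^{d+1}_{\frakN}(\Rees')$ is the plain intrinsic Frobenius and that $\phi$, $\psi$ intertwine these untwisted actions; the explicit \tCech computation above is precisely what removes this ambiguity. A secondary point worth a sentence is semilinearity compatibility: since $\gamma$ is $\Rees$-linear and $F$ is $p$-semilinear, both $\gamma\circ F$ and $F\circ\gamma$ transform by $s \mapsto s^p$ under scaling by $s \in \Rees$, so the asserted identity $\gamma F = F\gamma$ is at least consistent and then follows from the equivariance of the building blocks.
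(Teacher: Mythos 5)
Your proposal is correct and follows essentially the same route as the paper: both arguments rest on the Frobenius-compatibility of the commutative diagram of Discussion~\ref{discussionbox:reesExtRees} (checked on \tCech representatives via the explicit formulas for $\phi$ and $\psi$), together with the injectivity of $\gamma$ and the exactness of the rows. The paper phrases this as a diagram chase through the middle column --- lifting $\xi$ to $\xi' \in \oplus_{n<0}\homology^d_{\frakm}(R)t^n$ and observing that $cF^e(\xi') \in \image(\phi) \subseteq \image(\psi)$ --- whereas you package the same compatibility once as the identity $\gamma \circ F^e = F^e \circ \gamma$ and then conclude formally; the mathematical content is identical.
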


\begin{proof}
Without loss of generality, $\xi$ is homogeneous. 
Lift it to $\xi'$ in
$\oplus_{n<0}\homology^d_{\frakm}(R)t^n$. Then $cF^e(\xi') \in 
\image(\phi)$. Reading this in the lower row of the diagram of
Discussion~\ref{discussionbox:reesExtRees} we see that 
$cF^e(\gamma(\xi)) = 0$. A similar argument proves the other assertion.
\end{proof}

\begin{remark}
From~\cite[Corollary~1.10]{HaraWatanabeYoshidaFrationality2002}, we know
that there exists $c \in R$ that is a parameter test element for $R$ and
$\Rees$. The same argument shows that 
there exists $c \in R$ that is a parameter test element for $R$, 
$\Rees$ and $\Rees'$.
\end{remark}

As an immediate corollary, we get the following:
\begin{proposition}
\label{proposition:TCReesExtRees}
Adopt the notation of Discussion~\ref{discussionbox:reesExtRees}. Then
\[
\gamma(0^*_{\homology^{d+1}_{\frakM}(\Rees)})
=
\bigoplus_{j<0}\left(0^*_{\homology^{d+1}_{\frakN}(\Rees')}\right)_j.
\]
In particular, 
if $\Rees'_\frakN$ is $F$-rational, then so is $\Rees_\frakM$.
\end{proposition}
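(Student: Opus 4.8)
The statement to prove is Proposition~\ref{proposition:TCReesExtRees}: under the hypotheses that $R$ and $\Rees$ are Cohen-Macaulay (so that the diagram of Discussion~\ref{discussionbox:reesExtRees} is available), we have
\[
\gamma(0^*_{\homology^{d+1}_{\frakM}(\Rees)})
=
\bigoplus_{j<0}\left(0^*_{\homology^{d+1}_{\frakN}(\Rees')}\right)_j,
\]
and consequently $F$-rationality of $\Rees'_\frakN$ implies that of $\Rees_\frakM$.

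\bigskip

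The plan is to read off the equality of tight closures directly from the preceding Lemma, and then to deduce the $F$-rationality implication from it using the test-element machinery. First I would fix a common parameter test element: by the Remark immediately above, there is a single $0 \neq c \in R$ that is a parameter test element simultaneously for $R$, $\Rees$, and $\Rees'$. The advantage of having a common $c$ is that tight-closure membership in all three local cohomology modules can be tested by one and the same element, so that the condition ``$cF^e(\xi)=0$ for all $e \gg 0$'' detects $0^*$ uniformly across the diagram. I would also record that $\gamma$ is a graded map that is injective in all degrees and bijective in negative degrees (as established at the end of Discussion~\ref{discussionbox:reesExtRees}), and that $0^*_{\homology^{d+1}_{\frakM}(\Rees)}$ is concentrated in negative degrees, since $\homology^{d+1}_{\frakM}(\Rees)_n = 0$ for $n \geq 0$ by Discussion~\ref{discussionbox:HomologyOfRees}.

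\bigskip

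For the equality itself, the containment ``$\subseteq$'' is exactly the forward direction of the Lemma: if $\xi \in \homology^{d+1}_{\frakM}(\Rees)$ lies in $0^*$, then (using the common test element $c$) $cF^e(\xi)=0$ for all large $e$, whence by the Lemma $cF^e(\gamma(\xi))=0$ for all large $e$, so $\gamma(\xi) \in 0^*_{\homology^{d+1}_{\frakN}(\Rees')}$; since $\xi$ lives in negative degrees, $\gamma(\xi)$ lands in the right-hand side. The reverse containment ``$\supseteq$'' is the converse direction of the Lemma: an element $\zeta$ of $\bigl(0^*_{\homology^{d+1}_{\frakN}(\Rees')}\bigr)_j$ with $j<0$ satisfies $cF^e(\zeta)=0$ for $e \gg 0$, and since $\gamma$ is bijective in degree $j<0$, its preimage $\gamma^{-1}(\zeta)$ makes sense and satisfies $cF^e(\gamma^{-1}(\zeta))=0$, so it lies in $0^*_{\homology^{d+1}_{\frakM}(\Rees)}$, mapping onto $\zeta$ under $\gamma$. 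The only point that requires a little care is checking that the test-element formulation of tight closure (the \v{C}ech-cycle criterion spelled out in the $F$-rationality subsection) is compatible with applying the single element $c$ to Frobenius powers in each module; this is routine given that $c$ has been chosen as a common parameter test element.

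\bigskip

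For the final ``in particular'' clause, I would argue contrapositively through the characterization of $F$-rationality via vanishing of $0^*$ in top local cohomology. If $\Rees'_\frakN$ is $F$-rational then $0^*_{\homology^{d+1}_{\frakN}(\Rees')}=0$, so in particular every negative graded piece vanishes; by the equality just proved, $\gamma(0^*_{\homology^{d+1}_{\frakM}(\Rees)})=0$, and since $\gamma$ is injective this forces $0^*_{\homology^{d+1}_{\frakM}(\Rees)}=0$, i.e.\ $\Rees_\frakM$ is $F$-rational. I expect the main obstacle to be a bookkeeping one rather than a conceptual one: one must ensure that the grading conventions line up (that $0^*_{\homology^{d+1}_{\frakM}(\Rees)}$ really is supported in negative degrees so that restricting to $j<0$ on the right loses nothing), and that the single test element $c \in R$ genuinely governs tight closure in the top local cohomology of the larger rings $\Rees$ and $\Rees'$. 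Both of these are secured by the cited results, so the proof should be short.
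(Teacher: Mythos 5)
Your proposal is correct and follows essentially the same route as the paper, which states the proposition as an immediate corollary of the preceding Lemma together with the Remark supplying a common parameter test element $c \in R$ for $R$, $\Rees$ and $\Rees'$. The details you supply --- that the parameter test element annihilates all Frobenius powers of elements of $0^*$, that $\homology^{d+1}_{\frakM}(\Rees)$ is concentrated in negative degrees, and that $\gamma$ is injective everywhere and bijective in negative degrees --- are exactly the intended justification.
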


\begin{lemma}
\label{lemma:puncRees}
Suppose that $R$ is normal.
Let $S$ be a graded $R$-algebra with
$\Rees\subseteq S \subseteq \overline{\Rees}$. Then
$\sqrt{\frakM S}$ is the homogeneous maximal ideal of $S$. Further,
$\Proj S$ is $F$-rational if and only if 
$\Spec S \minus \{\sqrt{\frakM S}\}$ is $F$-rational. 
\end{lemma}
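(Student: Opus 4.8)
The plan is to treat the two assertions in turn, the first directly and the second by reducing the $F$-rationality of both $\Proj S$ and the punctured spectrum to one common pair of conditions. For the first assertion, note that since $(R,\frakm)$ is normal local it is a domain, that $\overline{\Rees}$ is module-finite over $\Rees$, and hence that $S$ is a module-finite (so integral) graded extension of $\Rees$ with $S_0 = R$, all three being domains. Writing $\frakM_S := \frakm + S_+$, the quotient $S/\frakM_S = R/\frakm$ is a field and a graded-quotient argument identifies $\frakM_S$ as the unique homogeneous maximal ideal of $S$. Since $\frakM S \subseteq \frakM_S$, it remains to see that no other homogeneous prime contains $\frakM S$: such a prime contracts in $\Rees$ to a homogeneous prime containing $\frakM$, hence to the maximal ideal $\frakM$, so by integrality it is maximal in $S$ and, being homogeneous, equals $\frakM_S$. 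Therefore $\sqrt{\frakM S} = \frakM_S$.

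For the second assertion, the geometric crux is that $\Proj S$ is covered by the charts $D_+(f_it)$ ($i=1,\dots,d$) attached to the degree-one elements $f_it \in \Rees_1 \subseteq S_1$, even though $S$ need not be generated in degree one. This uses the reduction: $J$ being a reduction of $I$ gives $\Rees_+ \subseteq \sqrt{Jt}$, and integrality of $S$ over $\Rees$ gives, via homogeneous integral equations, $S_+ \subseteq \sqrt{\Rees_+ S} \subseteq \sqrt{(Jt)S}$. Hence each point $\mathfrak p$ of $\Proj S$ (a homogeneous prime with $S_+ \not\subseteq \mathfrak p$) omits some $f_it$, so in the homogeneous localization $W^{-1}S$ the element $u := f_it$ is a degree-one unit and $W^{-1}S = S_{(\mathfrak p)}[u,u^{-1}]$ with $S_{(\mathfrak p)} = \calO_{\Proj S,\mathfrak p}$.

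I would then compare the two schemes through all their local rings, which is legitimate because $F$-rationality of each is equivalent to $F$-rationality of every one of its local rings. If a prime $\mathfrak q$ of $S$ has homogeneous part $\mathfrak q^* = \mathfrak p$ with $S_+ \not\subseteq \mathfrak p$, then $S_{\mathfrak q}$ is a localization of $S_{(\mathfrak p)}[u,u^{-1}]$, so Proposition~\ref{proposition:PolyRationalFrationality} and stability under localization give that $F$-rationality of $S_{(\mathfrak p)}$ implies that of $S_{\mathfrak q}$, and faithfully flat descent along $S_{(\mathfrak p)} \hookrightarrow S_{\mathfrak p}$ gives the converse. The primes $\mathfrak q \supseteq S_+$ are homogeneous of the form $\mathfrak p_0 + S_+$; for $\mathfrak q \neq \frakM_S$ we get $\mathfrak p_0 = \mathfrak q \cap R \neq \frakm$, so $I \not\subseteq \mathfrak p_0$ and, by normality, $(R \minus \mathfrak p_0)^{-1}S = R_{\mathfrak p_0}[t]$ --- equivalently $\Proj S \to \Spec R$ is an isomorphism over $\Spec R \minus \{\frakm\}$. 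Combining these, I would show that each of the two statements is equivalent to the conjunction of (I) $\Spec R \minus \{\frakm\}$ is $F$-rational, and (II) $S_{(\mathfrak p)}$ is $F$-rational for every homogeneous $\mathfrak p$ with $S_+ \not\subseteq \mathfrak p$ and $\mathfrak p \cap R = \frakm$; hence they are equivalent to each other.

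The main obstacle is the descent step, namely recovering $F$-rationality of $S_{(\mathfrak p)}$ or of $R_{\mathfrak p_0}$ from that of a local ring of the punctured spectrum lying above it (a localization of a Laurent or polynomial extension). I would settle this by faithfully flat descent of $F$-rationality, which is clean here because every ring involved is a domain: for a faithfully flat local homomorphism $A \hookrightarrow B$ of such domains with $B$ $F$-rational, Cohen--Macaulayness descends, and for a parameter ideal $\mathfrak q$ of $A$ one has $\mathfrak q^*_A \subseteq (\mathfrak q B)^*_B \cap A = \mathfrak q B \cap A = \mathfrak q$, the first containment being base change of tight closure (any nonzero $c \in A$ stays in $B^0 = B \minus \{0\}$) and the equalities using faithful flatness and $F$-rationality of $B$; thus $A$ is $F$-rational. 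The other point needing care is the degree-one cover of the second paragraph: this is where the hypothesis that $I$ admits the reduction $J$ is essential, and it is what circumvents the possible failure of $S$ to be generated in degree one.
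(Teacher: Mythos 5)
Your proposal is correct and follows essentially the same route as the paper: identify $\sqrt{\frakM S}$ as the homogeneous maximal ideal (the paper does this via powers of $S_j\subseteq\overline{I^j}t^j$ rather than lying-over, but the substance is the same), cover $\Proj S$ by the charts $D_+(f_it)$ using $\sqrt{JtS}=S_+$, exploit $S_{f_it}\simeq S_{(f_it)}[T,T^{-1}]$, and handle the primes over $\Spec R\minus\{\frakm\}$ via the isomorphism $\Proj S\minus V(IS)\simeq \Spec R\minus\{\frakm\}$. Your treatment is somewhat more explicit than the paper's about the faithfully flat descent steps, which the paper leaves implicit.
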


\begin{proof}
Note that $S_0 = R$. For every $j> 0$, $S_j \subseteq \overline{I^j}t^j$,
so there exists $N$ such that $S_j^N \subseteq I^{jN}t^{jN} \subseteq
\frakM S$. Hence
$\sqrt{\frakM S}$ is the homogeneous maximal ideal of $S$.

Write $X = \Proj S$.
Note that $\sqrt{JtS} = S_+$, so 
$\frakm S + S_+ = \sqrt{\frakM S}$ and
the affine schemes $\Spec S_{f_it}$, $1 \leq i \leq d$, form an open
covering for $\Spec S \minus V(S_+)$ and, by~\cite[Chapitre II,
Corollaire (2.3.14)]{EGA}, $\Spec S_{(f_it)}$, $1 \leq i \leq d$, form
an open covering for $X$. Since $S_{f_it} \simeq
S_{(f_it)}[T, T^{-1}]$~\cite[Chapitre II, (2.2.1)]{EGA}, we see that
$X$ is $F$-rational if and only if 
$\Spec S \minus V(S_+)$ is $F$-rational.
(Use Proposition~\ref{proposition:PolyRationalFrationality}.)

Now assume that 
$\Spec S\minus \{\sqrt{\frakM S}\}$ is $F$-rational. 
Then 
$\Spec S \minus V(S_+)$ is $F$-rational, so $X$ is $F$-rational. 
Conversely assume that 
$X$ is $F$-rational. Let $\frakQ \in
\Spec S \minus \{\sqrt{\frakM S}\}$. If $\frakQ \cap R = \frakm$, then $\frakQ
\in \Spec S \minus V(S_+)$, so $S_\frakQ$ is $F$-rational.
On the other hand, if $\frakQ \cap R \subsetneq \frakm$, then 
there exists $1 \leq i \leq d$ such that $f_i \not \in \frakQ$, so
$S_\frakQ \simeq (S_{f_i})_{\frakQ S_{f_i}}$. Now observe that
$\Spec R\minus \{\frakm\}
\simeq \Proj S \minus V(IS)$
is $F$-rational, so 
$S_{f_i} \simeq R_{f_i}[T]$ is $F$-rational.
\end{proof}

\begin{lemma}
\label{lemma:puncExtRees}
$\Spec \Rees'\minus \{\frakN\}$ is $F$-rational if and only if $R$ 
and $X$ are $F$-rational.
\end{lemma}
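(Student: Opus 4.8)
The plan is to cover the punctured spectrum $\Spec \Rees' \minus \{\frakN\}$ by the basic open sets $D(t^{-1})$ and $D(f_it)$, $1 \le i \le d$, and to identify each piece with something whose $F$-rationality we already understand. Since $(f_1t, \ldots, f_dt)\Rees'$ and $\Rees_+\Rees'$ have the same radical (as $Jt$ is a reduction of $\Rees_+$), we have $\frakN = \sqrt{(f_1t, \ldots, f_dt, t^{-1})\Rees'}$, so $V(\frakN) = \{\frakN\}$ and these sets indeed cover $\Spec \Rees' \minus \{\frakN\}$. Because $F$-rationality is a condition on local rings, a scheme is $F$-rational exactly when it is covered by $F$-rational open subschemes; thus it suffices to understand the rings $\Rees'_{t^{-1}}$ and $\Rees'_{f_it}$.

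The first identification is the localization $\Rees'_{t^{-1}} \cong R[t, t^{-1}]$: inverting $t^{-1}$ makes $t$ available, so every $at^m$ with $a \in R$, $m \in \ints$ lies in the localization, while $\Rees' \subseteq R[t,t^{-1}]$ to begin with. For the second, I would first record that $\Rees' = \Rees[t^{-1}]$ (the negative graded pieces of $\Rees'$ are precisely those generated by $t^{-1}$ over $\Rees$). The key, slightly surprising, point is that $t^{-1}$ is already present after inverting $f_it$: indeed $f_i \in R \subseteq \Rees$ and $t^{-1} = f_i\,(f_it)^{-1} \in \Rees_{f_it}$. Hence $\Rees'_{f_it} = \Rees_{f_it}[t^{-1}] = \Rees_{f_it}$, so $D(f_it) \subseteq \Spec \Rees'$ is isomorphic to $D(f_it) \subseteq \Spec \Rees$. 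Gluing these charts, the open subscheme $\Spec \Rees' \minus V(\Rees_+\Rees') = \bigcup_i D(f_it)$ of the punctured spectrum is isomorphic to $\Spec \Rees \minus V(\Rees_+)$.

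It then remains to assemble the two pieces. On one hand, $\Spec R[t,t^{-1}]$ is $F$-rational if and only if $R$ is: one direction is Proposition~\ref{proposition:PolyRationalFrationality}, and the reverse (descent of $F$-rationality along the faithfully flat map $R \to R[t,t^{-1}]$, i.e.\ from a Laurent polynomial ring to its coefficient ring) is the one genuinely non-formal input, and is exactly the descent already invoked in the proof of Lemma~\ref{lemma:puncRees}. On the other hand, as shown there, $\Spec \Rees \minus V(\Rees_+)$ is $F$-rational if and only if $X = \Proj \Rees$ is, again via $\Rees_{f_it} \cong \Rees_{(f_it)}[T,T^{-1}]$ and the standard chart cover of $X$. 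Combining these: if $R$ and $X$ are $F$-rational then both $D(t^{-1})$ and every $D(f_it)$ are $F$-rational, so the punctured spectrum is $F$-rational; conversely, if $\Spec \Rees' \minus \{\frakN\}$ is $F$-rational then its open subscheme $D(t^{-1}) \cong \Spec R[t,t^{-1}]$ forces $R$ to be $F$-rational (hence normal), and its open subscheme $\Spec\Rees\minus V(\Rees_+)$ forces $X$ to be $F$-rational. The main obstacle to watch is precisely this downward passage across the Laurent-polynomial extensions; everything else is either the elementary localization computation $\Rees'_{f_it} = \Rees_{f_it}$ or a direct appeal to Lemma~\ref{lemma:puncRees} and Proposition~\ref{proposition:PolyRationalFrationality}.
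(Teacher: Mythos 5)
Your proof is correct and follows essentially the same route as the paper's: the same cover of $\Spec \Rees' \minus \{\frakN\}$ by $D(t^{-1})$ and the $D(f_it)$, the same identifications $\Rees'_{t^{-1}} \simeq R[t,t^{-1}]$ and $\Rees'_{f_it} \simeq \Rees_{f_it}$ (the paper phrases the latter by noting that $\Rees'/\Rees$ is $\Rees_+$-torsion), and the same reduction to the argument of Lemma~\ref{lemma:puncRees} together with Proposition~\ref{proposition:PolyRationalFrationality}. Your explicit flagging of the descent along the faithfully flat Laurent-polynomial extension is a point the paper leaves implicit, but it is the same argument.
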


\begin{proof}
Since $\frakN = \sqrt{(f_1t, \ldots, f_dt)\Rees' + (t^{-1})\Rees'}$,
we see that 
$\Spec \Rees'\minus \{\frakN\}$ is $F$-rational if and only if
$\Rees'_{t^{-1}}$ and 
$\Rees'_{f_it}$, $1 \leq i \leq d$ are $F$-rational.

Now $\Rees'_{t^{-1}} \simeq R[t, t^{-1}]$, so $\Rees'_{t^{-1}}$ is
$F$-rational if and only if $R$ is $F$-rational. Moreover, every element of
the $\Rees$-module $\Rees'/\Rees$ is annihilated by a power of $\Rees_+$,
so the inclusion $\Rees_{f_it} \to \Rees'_{f_it}$ is an isomorphism for
every $1 \leq i \leq d$. Hence, arguing as in the proof of
Lemma~\ref{lemma:puncRees}, we conclude that $X$ if $F$-rational if and
only if $\Rees'_{f_it}$ is $F$-rational for every $1 \leq i \leq d$.
\end{proof}

\section{Theorem~\protect{\ref{theorem:extReesAndRees}}}

In this section we prove 
Theorem~\ref{theorem:extReesAndRees} and some corollaries.

\begin{lemma}
\label{lemma:ReesCM}
If $R$ is $F$-rational and $\Rees'$ is Cohen-Macaulay, 
$\Rees$ is Cohen-Macaulay. 
\end{lemma}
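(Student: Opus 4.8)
The plan is to reduce, via Remark~\ref{remarkbox:CMRees}, the Cohen--Macaulayness of $\Rees$ to a single cohomological vanishing on the blow-up $X$, and then to obtain that vanishing from the pseudo-rationality of $R$. Since $R$ is $F$-rational it is Cohen--Macaulay and normal, and since $\Rees'$ is Cohen--Macaulay, Remark~\ref{remarkbox:CMRees} shows that $G$ is Cohen--Macaulay. By the same remark it is enough to prove that $\left(\homology^d_{G_+}(G)\right)_n = 0$ for every $n \geq 0$. Here I would feed $G$ being Cohen--Macaulay (so $\homology^{d-1}_{G_+}(G)=0$) into the exact sequence of Discussion~\ref{discussionbox:HomologyOfRees} to get injections $\homology^d_{\Rees_+}(\Rees)_{n+1} \hookrightarrow \homology^d_{\Rees_+}(\Rees)_n$ with cokernel $\left(\homology^d_{G_+}(G)\right)_n$. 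Because $\homology^d_{\Rees_+}(\Rees)_n = \homology^{d-1}(X, (I\strSh_X)^n) = 0$ for $n \gg 0$, each graded piece in degree $\geq 1$ injects into the degree-$0$ piece; hence the whole problem collapses to the single vanishing
\[
\homology^d_{\Rees_+}(\Rees)_0 = \homology^{d-1}(X, \strSh_X) = 0 .
\]

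Next I would extract an injectivity statement from $\Rees'$ being Cohen--Macaulay alone (so as not to assume the conclusion that $\Rees$ is Cohen--Macaulay). Since $\Rees'_{\frakN}$ is Cohen--Macaulay of dimension $d+1$, we have $\homology^d_{\frakN}(\Rees') = 0$, so the Mayer--Vietoris-type sequence for $\frakN = \sqrt{\Rees_+\Rees' + (t^{-1})\Rees'}$ recalled in Discussion~\ref{discussionbox:reesExtRees} produces an injection
\[
\psi\colon \homology^d_{\Rees_+}(\Rees') \hookrightarrow \homology^d_{\Rees_+}(\Rees'_{t^{-1}}) = \bigoplus_{n \in \ints}\homology^d_{\frakm}(R)\,t^n ,
\]
using $\Rees'_{t^{-1}} \simeq R[t,t^{-1}]$ and $\sqrt{\Rees_+\Rees'_{t^{-1}}} = \sqrt{\frakm\Rees'_{t^{-1}}}$. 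As $\Rees'/\Rees$ is $\Rees_+$-torsion, concentrated in negative degrees, and $d \geq 2$, the degree-$0$ part gives an injection $\psi_0\colon \homology^{d-1}(X, \strSh_X) \hookrightarrow \homology^d_{\frakm}(R)$. Geometrically $\psi_0$ is restriction of classes along $X \minus E \simeq \Spec R \minus \{\frakm\}$, where I identify $\homology^{d-1}(X \minus E, \strSh_X) = \homology^{d-1}(\Spec R \minus \{\frakm\}, \strSh) = \homology^d_{\frakm}(R)$ (valid since $R$ is Cohen--Macaulay of dimension $d \geq 2$).

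Finally I would invoke pseudo-rationality. The $F$-rational ring $R$ is pseudo-rational (Smith), so for the proper birational morphism $\pi\colon X \to \Spec R$ the edge map $\homology^d_{\frakm}(R) \to \homology^d_E(X, \strSh_X)$ is injective, where $E = \pi^{-1}(\{\frakm\})$. Inserting $\psi_0$ into the long exact sequence of local cohomology with supports in $E$,
\[
\homology^{d-1}_E(X,\strSh_X) \to \homology^{d-1}(X, \strSh_X) \xrightarrow{\psi_0} \homology^{d-1}(X \minus E, \strSh_X) \xrightarrow{\partial} \homology^d_E(X, \strSh_X) ,
\]
the connecting map $\partial$ is exactly that pseudo-rationality edge map, hence injective; by exactness $\image \psi_0 = \ker \partial = 0$, so $\psi_0 = 0$. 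Combined with the injectivity of $\psi_0$ this forces $\homology^{d-1}(X, \strSh_X) = 0$, completing the reduction, and hence the proof.

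I expect the main obstacle to be the comparison of maps in the last step: one must verify that the algebraically defined $\psi_0$ agrees with the topological restriction $\homology^{d-1}(X,\strSh_X) \to \homology^{d-1}(X\minus E, \strSh_X)$, and that the connecting homomorphism $\partial$ is the same edge map of the Leray spectral sequence for $\pi$ whose injectivity pseudo-rationality provides. This is where the geometry of the blow-up must be used carefully --- in particular that $\pi$ is an isomorphism over $\Spec R \minus \{\frakm\}$, so that $X \minus E \simeq \Spec R \minus \{\frakm\}$ and the higher direct images $\RDer^i\pi_*\strSh_X$ ($i>0$) have finite length. A clean way to organise this comparison is through the Sancho de Salas sequence relating $\homology^*_{\frakM}(\Rees)_0$, $\homology^*_{\frakm}(R)$ and $\homology^*_E(X, \strSh_X)$; everything else is formal from Remark~\ref{remarkbox:CMRees} and the Cohen--Macaulayness of $\Rees'$.
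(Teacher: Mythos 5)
Your proposal is correct, and its two essential inputs are exactly those of the paper's proof: the Cohen--Macaulayness of $G$ (extracted from that of $\Rees'$) and the pseudo-rationality of $R$ (via Smith's theorem). The difference is that the paper stops there and cites \cite[Theorem~(5)]{LipCMgraded94}, whereas you open up that black box and re-derive the implication ``$R$ pseudo-rational and $G$ Cohen--Macaulay $\Rightarrow$ $\Rees$ Cohen--Macaulay'' from scratch. Your derivation is sound: the reduction via Remark~\ref{remarkbox:CMRees} and the injections $\homology^d_{\Rees_+}(\Rees)_{n+1}\hookrightarrow\homology^d_{\Rees_+}(\Rees)_n$ correctly collapses everything to $\homology^{d-1}(X,\strSh_X)=0$ (note the $n\gg 0$ vanishing you mention is not even needed for this direction); the injectivity of $\psi_0$ uses only $\homology^d_{\frakN}(\Rees')=0$, i.e.\ the Cohen--Macaulayness of $\Rees'$ and not of $\Rees$, which you are right to emphasize; and the final step correctly plays the injectivity of $\psi_0$ against the vanishing of its image forced by the pseudo-rationality edge map. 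The one point that genuinely needs checking --- that $\psi_0$ is the restriction $\homology^{d-1}(X,\strSh_X)\to\homology^{d-1}(X\minus E,\strSh_X)$ and that $\partial$ is the Leray edge map --- is the standard Sancho de Salas comparison (it is visible on \tCech cocycles for the cover $\{\Spec\Rees_{(f_it)}\}$, and the compatibility of $\partial$ with the edge map follows from functoriality of the local-cohomology sequence under $\pi$ together with $\homology^{d-1}(\Spec R,\strSh)=\homology^d(\Spec R,\strSh)=0$); you flag it accurately. What the paper's route buys is brevity; what yours buys is a self-contained proof of the needed case of Lipman's theorem using only machinery already set up in Section~\ref{section:prelims} and Discussion~\ref{discussionbox:reesExtRees}.
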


\begin{proof}
Since $\Rees'$ is Cohen-Macaulay, $G \simeq \Rees'/(t^{-1})$ 
is Cohen-Macaulay. Note that $R$ is pseudo-rational. Hence the lemma
follows from~\cite[Theorem~(5)]{LipCMgraded94}.
\end{proof}

\begin{lemma}
\label{lemma:RReesFratlTCExtRees}
Suppose that $R$ is Cohen-Macaulay and that $\Rees$ is $F$-rational. 
Then $R$ is $F$-rational if and only if 
$0^*_{\homology^{d+1}_\frakN(\Rees')} = 0$.
\end{lemma}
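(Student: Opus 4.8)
The plan is to compare, degree by degree, the tight closure of zero in $\homology^{d+1}_{\frakN}(\Rees')$ with that in $\homology^d_{\frakm}(R)$. Since $\Rees$ is $F$-rational it is Cohen--Macaulay, and as $R$ is Cohen--Macaulay by hypothesis we land in the setting of Discussion~\ref{discussionbox:reesExtRees}: $\Rees'$ is Cohen--Macaulay, and its lower row is the short exact sequence
\[
0 \to \homology^d_{\Rees_+}(\Rees') \xrightarrow{\ \psi\ }
\bigoplus_{n \in \ints}\homology^d_{\frakm}(R)\,t^n \xrightarrow{\ \pi\ }
\homology^{d+1}_{\frakN}(\Rees') \to 0 .
\]
Recalling that there is a single element $c \in R$ that is a parameter test element for $R$, $\Rees$ and $\Rees'$ (so $c$ lies off the minimal primes of both $R$ and $\Rees'$ and detects the tight closure of zero in the top local cohomology of each), and that $c$ is homogeneous of degree $0$, the module $0^*_{\homology^{d+1}_{\frakN}(\Rees')}$ is graded; so I would reduce to controlling its homogeneous components one degree at a time. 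The negative components are already free: by Proposition~\ref{proposition:TCReesExtRees} they sum to $\gamma\!\left(0^*_{\homology^{d+1}_{\frakM}(\Rees)}\right)$, which is zero because $\Rees_{\frakM}$ is $F$-rational of dimension $d+1$.

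The crux is to show that $\pi$ is a Frobenius-compatible isomorphism in every nonnegative degree. First I would observe that $\Rees'/\Rees = \bigoplus_{n<0} R\,t^n$ is $\Rees_+$-torsion, so the long exact sequence of $0 \to \Rees \to \Rees' \to \Rees'/\Rees \to 0$ yields graded isomorphisms $\homology^d_{\Rees_+}(\Rees) \cong \homology^d_{\Rees_+}(\Rees')$ (using $d \geq 2$). Since $\Rees$ is Cohen--Macaulay, Remark~\ref{remarkbox:CMRees} gives $\homology^d_{\Rees_+}(\Rees)_j = 0$ for all $j \geq 0$, hence $\homology^d_{\Rees_+}(\Rees')_j = 0$ for $j \geq 0$, and reading the sequence above in degree $j \geq 0$ shows $\pi_j \colon \homology^d_{\frakm}(R)\,t^j \xrightarrow{\ \sim\ } \homology^{d+1}_{\frakN}(\Rees')_j$. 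For the Frobenius compatibility I would note that the whole sequence is one of modules with Frobenius, its middle term $\bigoplus_n \homology^d_{\frakm}(R)t^n = \homology^d_{\Rees_+}(\Rees'_{t^{-1}})$ carrying the natural action $\eta t^n \mapsto F(\eta)t^{pn}$; under the identifications $\homology^d_{\frakm}(R)t^j \cong \homology^d_{\frakm}(R)$ this reads $F \circ \pi_j = \pi_{pj} \circ F$.

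With these in hand both implications drop out. For a homogeneous $\xi = \pi_j(\eta) \in \left(0^*_{\homology^{d+1}_{\frakN}(\Rees')}\right)_j$ with $j \geq 0$, the identity $c\,F^e(\xi) = \pi_{p^e j}\!\left(c\,F^e(\eta)\right)$ together with injectivity of $\pi_{p^e j}$ (valid since $p^e j \geq 0$) shows $c\,F^e(\eta) = 0$ for $q \gg 0$, i.e. $\eta \in 0^*_{\homology^d_{\frakm}(R)}$; so if $R$ is $F$-rational then $\eta = 0$ and $\xi = 0$, and combined with the negative degrees this gives $0^*_{\homology^{d+1}_{\frakN}(\Rees')} = 0$. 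Conversely, applying this in degree $j = 0$ shows that any $\eta \in 0^*_{\homology^d_{\frakm}(R)}$ maps to $\pi_0(\eta) \in 0^*_{\homology^{d+1}_{\frakN}(\Rees')}$, which vanishes by hypothesis; as $\pi_0$ is injective, $\eta = 0$, so $R$ is $F$-rational.

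The main obstacle I anticipate is the behaviour in the low nonnegative degrees. The isomorphism $\homology^{d+1}_{\frakN}(\Rees')_j \cong \homology^d_{\frakm}(R)$ is immediate only for $j \gg 0$, where $\homology^d_{\Rees_+}(\Rees')_j$ vanishes by Discussion~\ref{discussionbox:HomologyOfRees}. Upgrading it to all $j \geq 0$—so that no tight-closure element can hide in a bounded band of nonnegative degrees—is precisely where the Cohen--Macaulayness of $\Rees$ (via Remark~\ref{remarkbox:CMRees}) is indispensable. By comparison, checking the Frobenius-equivariance of the comparison maps and the availability of the common test element $c \in R$ governing both modules are routine.
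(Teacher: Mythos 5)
Your proof is correct and takes essentially the same route as the paper's: both rest on the lower row of the diagram in Discussion~\ref{discussionbox:reesExtRees}, the vanishing $\homology^d_{\Rees_+}(\Rees')_j = 0$ for $j \geq 0$ coming from the Cohen--Macaulayness of $\Rees$, Proposition~\ref{proposition:TCReesExtRees} together with the $F$-rationality of $\Rees_\frakM$ to kill the negative degrees, and a common parameter test element in $R$ to transfer tight-closure membership across the nonnegative-degree isomorphisms. The only difference is presentational: the paper carries out the nonnegative-degree argument with explicit \tCech cycles $\bigl[at^k/f^lt^{(d-1)l}\bigr]$ rather than with the graded pieces $\pi_j$ of the connecting map.
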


\begin{proof}
We see
from the lower row of the
diagram in Discussion~\ref{discussionbox:reesExtRees} that 
${\homology^{d}_\frakm(R)}  \subseteq 
{\homology^{d+1}_\frakN(\Rees')}_0$. Hence, 
if $0^*_{\homology^{d+1}_\frakN(\Rees')} = 0$, then
$0^*_{\homology^{d}_\frakm(R)} \subseteq
\left(0^*_{\homology^{d+1}_\frakN(\Rees')}\right)_0 = 0$,
and, therefore, $R$ is $F$-rational.

Conversely, assume that $R$ is $F$-rational. Every homogeneous element of
${\homology^{d+1}_\frakN(\Rees')}$ is the class of a \tCech cycle of the
form
\[
\frac{at^k}{f^lt^{(d-1)l}}
\]
with $at^k \in \Rees'_k$ and $f = f_1\cdots f_d$. (We use the
$\frakN$-primary ideal $(f_1t, \ldots, f_dt, t^{-1})$ for describing
${\homology^{d+1}_\frakN(\Rees')}$.) Suppose that
\[
\left[\frac{at^k}{f^lt^{(d-1)l}}\right] \in 
0^*_{\homology^{d+1}_\frakN(\Rees')}.
\]
Then from the lower row of the commutative diagram in
Discussion~\ref{discussionbox:reesExtRees}, we see that $k \geq (d-1)l$, so
this element of ${\homology^{d+1}_\frakN(\Rees')}$ is the image of 
\[
\left[\frac{a}{f^l}\right]\cdot t^{k-(d-1)l} \in 
\oplus_{n \geq 0} \homology^d_\frakm(R) t^n.
\]
Since there exists $c \in R$ such that 
\[
\left[\frac{ca^qt^{kq}}{f^{lq}t^{(d-1)lq}}\right] = 0
\]
we conclude that 
\[
\left[\frac{ca^q}{f^{lq}}\right]\cdot t^{(k-(d-1)l)q} = 0 \in 
\oplus_{n \geq 0} \homology^d_\frakm(R) t^n.
\]
so 
\[
\left[\frac{ca^q}{f^{lq}}\right] = 0 \in \homology^d_\frakm(R).
\]
Since $R$ is $F$-rational, 
$0^*_{\homology^{d+1}_\frakN(\Rees')} = 0$.
\end{proof}

\begin{proof}%
[{Proof of Theorem~\protect{\ref{theorem:extReesAndRees}}}]
\underline{\eqref{theorem:extReesAndRees:ExtReesFratl} $\implies$
\eqref{theorem:extReesAndRees:ReesFratl}}:
$\Spec \Rees \minus \{\frakM\}$ is $F$-rational by 
Lemmas~\ref{lemma:puncExtRees} and~\ref{lemma:puncRees}.
On the other hand, $\Rees_\frakM$ is $F$-rational by 
Lemma~\ref{lemma:ReesCM} and
Proposition~\ref{proposition:TCReesExtRees}.

\underline{ 
\eqref{theorem:extReesAndRees:ReesFratl}
$\implies$
\eqref{theorem:extReesAndRees:ExtReesFratl}
}:
$\Spec \Rees' \minus \{\frakN\}$ is $F$-rational by 
Lemmas~\ref{lemma:puncExtRees} and~\ref{lemma:puncRees}.
Lemma~\ref{lemma:RReesFratlTCExtRees} implies that $\Rees'_\frakN$ is
$F$-rational.
\end{proof}

In general, even if $\Rees$ is $F$-rational, $R$ need not be
$F$-rational.
Hara, Watanabe and Yoshida show that if 
$\Rees$ is $F$-rational and 
the $a$-invariant of the associated
graded ring $G$ is at most $-2$
then $R$ is
$F$-rational~\cite[Corollary~2.13]{HaraWatanabeYoshidaFrationality2002}. 
In~\cite[Example~3.9]{HaraWatanabeYoshidaFrationality2002}, they exhibit an
example where $R$ is not $F$-rational but 
$\Rees$ is. 
As an application of our earlier results,
we get a sufficient condition for the $F$-rationality
of $R$, given the $F$-rationality of $\Rees'$,
which
recovers~\cite[Corollary~2.13]{HaraWatanabeYoshidaFrationality2002}.

\begin{proposition}
\label{theorem:assGrFInj}
Let $(R, \frakm)$ be an excellent Cohen-Macaulay local 
domain of prime characteristic and $I$ an $\frakm$-primary
$R$-ideal. 
Suppose that 
$\Rees_R(I)$ is $F$-rational.
If the Frobenius map $\homology^d_{G_+}(G)_{-1} \to
\homology^d_{G_+}(G)_{-p}$ is
injective, then $R$ and $\Rees'_R(I)$ are $F$-rational.
\end{proposition}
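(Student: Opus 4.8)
The plan is to prove the proposition by reducing everything to the single statement that $R$ is $F$-rational; once that is established, $R$ and $\Rees:=\Rees_R(I)$ are both $F$-rational and Theorem~\ref{theorem:extReesAndRees} immediately gives that $\Rees':=\Rees'_R(I)$ is $F$-rational. Since $\Rees$ is $F$-rational it is Cohen--Macaulay, so by Remark~\ref{remarkbox:CMRees} the ring $G$ is Cohen--Macaulay, $\Rees'$ is Cohen--Macaulay, and $\homology^{d}_{G_+}(G)_{n}=0$ for every $n\geq 0$. In particular the hypotheses of Lemma~\ref{lemma:RReesFratlTCExtRees} are met, so it suffices to show that $0^{*}_{\homology^{d+1}_{\frakN}(\Rees')}=0$.

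First I would restrict the support of this tight closure. Because $\Rees$ is $F$-rational, $0^{*}_{\homology^{d+1}_{\frakM}(\Rees)}=0$, so Proposition~\ref{proposition:TCReesExtRees} shows that $0^{*}_{\homology^{d+1}_{\frakN}(\Rees')}$ is concentrated in degrees $\geq 0$. Next I would feed the short exact sequence $0\to \Rees'(1)\xrightarrow{t^{-1}}\Rees'\to G\to 0$ into the machinery of Discussion~\ref{discussionbox:FrobOnIdeals} (with $x=t^{-1}$, $m=-1$). Since $\homology^{d+1}_{G_+}(G)=0$ (as $\dim G=d$) and $\homology^{d}_{G_+}(G)_{n}=0$ for $n\geq 0$, multiplication by $t^{-1}$ is an isomorphism $\homology^{d+1}_{\frakN}(\Rees')_{n+1}\xrightarrow{\ \sim\ }\homology^{d+1}_{\frakN}(\Rees')_{n}$ for every $n\geq 0$. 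Hence $t^{-1}$ embeds each graded piece of the tight closure into the one below, and it is enough to kill the degree-zero piece $(0^{*}_{\homology^{d+1}_{\frakN}(\Rees')})_{0}$. Moreover, for $\xi$ in that piece one has $t^{-1}\xi\in(0^{*}_{\homology^{d+1}_{\frakN}(\Rees')})_{-1}=0$, so $\xi\in\ker(t^{-1})$, which by the same sequence is exactly the image of the connecting map $\delta_{-1}\colon\homology^{d}_{G_+}(G)_{-1}\to\homology^{d+1}_{\frakN}(\Rees')_{0}$; write $\xi=\delta_{-1}(\eta)$.

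The crux, which I expect to be the main point, is the observation that $\homology^{d}_{\frakN}(\Rees')=0$: the ring $\Rees'$ is Cohen--Macaulay of dimension $d+1$, and $\frakN$ is its homogeneous maximal ideal (as $I$ is $\frakm$-primary, $\frakm\subseteq\frakN$), so $\homology^{i}_{\frakN}(\Rees')=0$ for every $i\leq d$. Consequently every connecting map $\delta_{j}\colon\homology^{d}_{G_+}(G)_{j}\to\homology^{d+1}_{\frakN}(\Rees')_{1+j}$ is injective. Now I apply the Frobenius-compatible ladder~\eqref{equation:FrobModNZD} in the square joining degrees $-1$ and $-p$: it gives $\delta_{-p}(F(\eta))=(t^{-1})^{p-1}F(\xi)$. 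But $F(\xi)$ again lies in $(0^{*}_{\homology^{d+1}_{\frakN}(\Rees')})_{0}\subseteq\ker(t^{-1})$, so the right-hand side equals $(t^{-1})^{p-2}\bigl(t^{-1}F(\xi)\bigr)=0$; injectivity of $\delta_{-p}$ then forces $F(\eta)=0$. At this point the hypothesis enters: the assumed injectivity of $F\colon\homology^{d}_{G_+}(G)_{-1}\to\homology^{d}_{G_+}(G)_{-p}$ gives $\eta=0$, whence $\xi=\delta_{-1}(\eta)=0$.

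This yields $(0^{*}_{\homology^{d+1}_{\frakN}(\Rees')})_{0}=0$, hence $0^{*}_{\homology^{d+1}_{\frakN}(\Rees')}=0$, so $R$ is $F$-rational by Lemma~\ref{lemma:RReesFratlTCExtRees} and $\Rees'$ is $F$-rational by Theorem~\ref{theorem:extReesAndRees}. The delicate part is not any single computation but the structural recognition that the vanishing $\homology^{d}_{\frakN}(\Rees')=0$ collapses the tight-closure condition --- which a priori carries a test-element multiplier --- into the bare equation $F(\eta)=0$, exactly where \emph{injectivity of Frobenius} (rather than mere vanishing of $\homology^{d}_{G_+}(G)_{-1}$) is what is used. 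This also explains why the statement recovers the case $a(G)\leq -2$ of Hara--Watanabe--Yoshida: there $\homology^{d}_{G_+}(G)_{-1}=0$, so $\eta=0$ for free and no Frobenius injectivity is needed.
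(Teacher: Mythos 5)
Your proof is correct and takes essentially the same route as the paper's: both reduce via Proposition~\ref{proposition:TCReesExtRees} and Lemma~\ref{lemma:RReesFratlTCExtRees} to showing $\bigl(0^*_{\homology^{d+1}_{\frakN}(\Rees')}\bigr)_0=0$, and both use the Frobenius ladder~\eqref{equation:FrobModNZD} with $x=t^{-1}$ together with the injectivity of $\homology^d_{G_+}(G)_{-p}\to\homology^{d+1}_{\frakN}(\Rees')_{1-p}$ to contradict the hypothesis on Frobenius. The only cosmetic difference is that you organize the degree reduction via the $t^{-1}$-isomorphisms in nonnegative degrees instead of picking a nonzero element of minimal degree as the paper does.
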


\begin{proof}
We first show that $0^*_{\homology^{d+1}_{\frakN}(\Rees')} = 0$. 
By way of contradiction, assume that 
$0^*_{\homology^{d+1}_{\frakN}(\Rees')} \neq 0$. 
By Proposition~\ref{proposition:TCReesExtRees}
we may pick a non-zero $\xi \in 
0^*_{\homology^{d+1}_{\frakN}(\Rees')}$ of smallest degree. 
Write $j+1$ for this degree.
Then $j \geq -1$. Note that $t^{-1}\xi = 0$, so
$\xi$ is the image of a non-zero element $\zeta \in
\homology^{d}_{G_+}(G)_{j}$. Since $\Rees$ is Cohen-Macaulay, $j <
0$.  Hence $j =-1$.

Now consider the diagram~\eqref{equation:FrobModNZD} from
Discussion~\ref{discussionbox:FrobOnIdeals}, with $S = \Rees'$, $x =
t^{-1}$ and $m = -1$. Rewriting it with only the relevant cohomology
groups we get the following:
\[
\xymatrix{
0 \ar[r] &
\homology^{d}_{G_+}(G)_{-1} \ar[r] \ar[d]^F&
\homology^{d+1}_{\frakN}(\Rees')_{0} \ar[r]^{t^{-1}} \ar[d]^{t^{1-p}F}&
\homology^{d+1}_{\frakN}(\Rees')_{{-1}} \ar[r] \ar[d]^F&
0 \\
0 \ar[r] &
\homology^{d}_{G_+}(G)_{-p} \ar[r] &
\homology^{d+1}_{\frakN}(\Rees')_{1-p} \ar[r]^{t^{-1}}&
\homology^{d+1}_{\frakN}(\Rees')_{-p} \ar[r]&
0.
}
\]
Note that $F(\xi) \in 
\left(0^*_{\homology^{d+1}_{\frakN}(\Rees')}\right)_0$, so 
$t^{1-p}F(\xi) = 0$. On the other hand, by hypothesis, $F(\zeta) \neq 0$, a
contradiction. Hence
$0^*_{\homology^{d+1}_{\frakN}(\Rees')} = 0$.

Hence $\Rees'_\frakN$ and $R$ are $F$-rational 
(Lemma~\ref{lemma:RReesFratlTCExtRees}). 
Since $\Proj \Rees$ is $F$-rational, $\Spec \Rees' \minus \{\frakN\}$ is
$F$-rational (Lemma~\ref{lemma:puncExtRees}). Therefore $\Rees'$ is
$F$-rational.
\end{proof}

The next example shows that the hypothesis on 
$\homology^d_{G_+}(G)$ 
in Proposition~\ref{theorem:assGrFInj} is 
only sufficient, but not necessary.

\begin{examplebox}
Let $R=\Bbbk[[X,Y,Z]]/(X^2+Y^3+Z^5)$, with $\Bbbk$ a field of
characteristic $p \geq 7$.
It is a two-dimensional $F$-rational ring. Since $\frakm = (X,Y,Z)$ is
integrally closed, the Rees algebra $\Rees_R(\frakm)$ is $F$-rational 
by~\cite[Theorem~3.1]{HaraWatanabeYoshidaFrationality2002}
(Theorem~\ref{theorem:dimtwo} below).
Note that $G=\Bbbk[x,y,z]/(x^2)$, where $x, y, z$ correspond to
$X,Y,Z$. Compute 
$H^2_{G_+}(G)$ from the \tCech complex
$\check{C}^{\bullet}(y,z;G)$. 
Use Remark~\ref{remarkbox:cechcyclezero} to conclude that
the class 
\[
\left[\frac{x}{yz}\right] \in H^2_{G_+}(G)_{-1}
\]
of the \tCech cycle $\frac{x}{yz}$ is non-zero, but 
\[
F\left(\left[\frac{x}{yz}\right]\right) =
\left[\frac{x^p}{y^pz^p}\right] = 
0 \in H^2_{G_+}(G)_{-p}.
\]
Hence the map $H^2_{G_+}(G)_{-1}\to H^2_{G_+}(G)_{-p}$ is not injective.
\end{examplebox}

As another application of our results
we recover the following result of Hara, Watanabe and
Yoshida~\cite[Theorem~3.1]{HaraWatanabeYoshidaFrationality2002}.
(They mention that Huneke and Smith also had obtained this result.)

\begin{theorem}
\label{theorem:dimtwo}
Let $(R,\frakm)$ be a two-dimensional excellent 
$F$-rational domain of prime characteristic. 
Let $I$ be an $\frakm$-primary
integrally closed ideal. Then $\Rees_R(I)$ is $F$-rational.
\end{theorem}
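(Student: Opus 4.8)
The plan is to show first that $\Rees := \Rees_R(I)$ is Cohen--Macaulay and then to deduce its $F$-rationality via Lemma~\ref{lemma:puncRees}, which reduces the question to the $F$-rationality of $X = \Proj \Rees$ and of the local ring $\Rees_\frakM$; throughout $d = 2$. I would begin by recording that, being excellent, two-dimensional and $F$-rational, $R$ is Cohen--Macaulay, normal, and (by Smith's theorem) pseudo-rational. Because $I$ is $\frakm$-primary and integrally closed, Remark~\ref{remarkbox:twodimpseudorational} applies with $Y = X$ and yields $\homology^1(X, I^n\strSh_X) = 0$ for every $n \geq 0$; by the identification $\homology^2_{\Rees_+}(\Rees)_n = \homology^1(X, (I\strSh_X)^n)$ of Discussion~\ref{discussionbox:HomologyOfRees}, this says exactly that $\homology^2_{\Rees_+}(\Rees)_n = 0$ for all $n \geq 0$, equivalently (Remark~\ref{remarkbox:CMRees}) that $\bigl(\homology^2_{G_+}(G)\bigr)_n = 0$ for all $n \geq 0$.

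The first step is then the Cohen--Macaulayness of $\Rees$. Given the vanishing above, Remark~\ref{remarkbox:CMRees} reduces this to the Cohen--Macaulayness of $G = \AssGr_I(R)$, which I would obtain from the fact that an $\frakm$-primary integrally closed ideal in a two-dimensional rational singularity has reduction number one with respect to its minimal reduction $J = (f_1, f_2)$ (Lipman--Teissier), i.e.\ $I^{n+1} = JI^n$ for all $n \geq 1$. Together with the Cohen--Macaulayness of $R$ this forces $\depth G \geq 2$, so $G$, and hence $\Rees$ and $\Rees'$, are Cohen--Macaulay; in particular $\homology^3_\frakM(\Rees)_n = 0$ for all $n \geq 0$.

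For the $F$-rationality I would treat the two pieces of Lemma~\ref{lemma:puncRees} separately. Away from the exceptional divisor $E = \Proj G$, the scheme $X$ is isomorphic to $\Spec R \minus \{\frakm\}$ and is $F$-rational because $R$ is; the substantive point is that the two-dimensional normal local rings of $X$ at the closed points of $E$ are $F$-rational, which is the geometric counterpart, for the integrally-closed blow-up of a pseudo-rational surface, of the vanishing in Remark~\ref{remarkbox:twodimpseudorational}. For the vertex, $\Rees_\frakM$ is Cohen--Macaulay of dimension three, so its $F$-rationality amounts to $0^*_{\homology^3_\frakM(\Rees)} = 0$, which by Proposition~\ref{proposition:TCReesExtRees} is the vanishing of $\bigl(0^*_{\homology^3_\frakN(\Rees')}\bigr)_j$ for $j < 0$. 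Here I would first kill the non-negative degrees exactly as in the converse direction of Lemma~\ref{lemma:RReesFratlTCExtRees} --- using only that $R$ is $F$-rational and that $\Rees$ is Cohen--Macaulay, any homogeneous tight-closure class of non-negative degree descends to $0^*_{\homology^2_\frakm(R)} = 0$ --- and then run the minimal-degree argument from the proof of Proposition~\ref{theorem:assGrFInj}: a nonzero class $\xi$ of least (now negative) degree is annihilated by $t^{-1}$, hence comes from a class $\zeta \in \homology^2_{G_+}(G) = \homology^1(E, \strSh_E(\ast))$ supported on the exceptional curve.

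The main obstacle is disposing of this exceptional class. In Proposition~\ref{theorem:assGrFInj} the analogous step was settled by assuming the Frobenius action on $\homology^2_{G_+}(G)_{-1}$ injective, but the example preceding this theorem shows that this injectivity fails precisely for integrally closed ideals, so that shortcut is unavailable. Instead one must use genuine $F$-rationality: there is a single $c \in R^0$ that is a parameter test element for $R$, $\Rees$ and $\Rees'$ (recorded earlier), and, propagating the relation $cF^e(\xi) = 0$ through the Frobenius diagram~\eqref{equation:FrobModNZD} with $S = \Rees'$ and $x = t^{-1}$ while tracking the grading, one shows that the survival of $\zeta$ under Frobenius is incompatible with the $F$-rationality of $R$ and the vanishing of Remark~\ref{remarkbox:twodimpseudorational}. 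Carrying out this degree bookkeeping, so that the failure of Frobenius injectivity on $\homology^1(E, \strSh_E(-1))$ does not obstruct the conclusion, is the technical heart of the argument. Once $0^*_{\homology^3_\frakM(\Rees)} = 0$ is in hand, $\Rees_\frakM$ is $F$-rational, and with the $F$-rationality of $X$ established above Lemma~\ref{lemma:puncRees} gives that $\Rees$ is $F$-rational. (Equivalently, one may package $0^*_{\homology^3_\frakN(\Rees')} = 0$ together with the punctured-spectrum input into the $F$-rationality of $\Rees'$ and then invoke Theorem~\ref{theorem:extReesAndRees}.)
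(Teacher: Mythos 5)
Your outline has the right general shape (establish Cohen--Macaulayness, reduce to the punctured spectrum plus the vertex, then kill $0^*$ in the top local cohomology degree by degree), but it leaves unproved exactly the two points where the content of the theorem lies. First, the $F$-rationality of $X=\Proj\Rees$ at the closed points of the exceptional divisor is asserted as ``the geometric counterpart \dots of the vanishing in Remark~\ref{remarkbox:twodimpseudorational}.'' That remark only records vanishing of coherent cohomology ($\homology^1(Y,\strSh_Y)=\homology^1(Y,\omega_Y)=\homology^1(Y,I^n\strSh_Y)=0$), consequences of pseudo-rationality; pseudo-rationality is strictly weaker than $F$-rationality and says nothing about tight closure in the local rings $\strSh_{X,x}$, so no such counterpart is available off the shelf. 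Second, and more seriously, the step you yourself label ``the technical heart'' --- ruling out a nonzero minimal-degree class $\xi$ with $t^{-1}\xi=0$ coming from $\homology^2_{G_+}(G)$ --- is never carried out. You correctly note that the Frobenius-injectivity shortcut of Proposition~\ref{theorem:assGrFInj} is unavailable, but ``one shows that the survival of $\zeta$ under Frobenius is incompatible with\dots'' is a statement of intent, not an argument, and it is not clear that the diagram chase you describe closes without further arithmetic input about $I$.

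The paper's proof sidesteps both issues by a direct element computation: it shows that the ideal $(f_1t,f_2t,t^{-1})$ of $\Rees'$ is tightly closed, whence $\Rees'_\frakN$ is $F$-rational, $\Rees_\frakM$ is $F$-rational by Proposition~\ref{proposition:TCReesExtRees}, and the globalization is handled by Lemma~1.12 of Hara--Watanabe--Yoshida rather than by proving $\Proj\Rees$ $F$-rational separately. In that computation a homogeneous $at^k$ in the tight closure with $k\ge 2$ lies in $(f_1t,f_2t)$ by reduction number one; for $k=0$ one gets $ca^q\in I^q$ for $q\gg 0$ and concludes $a\in\overline I=I=(t^{-1})_0$ from the \emph{integral closedness of $I$}; for $k=1$ one gets $ca^q\in(f_1^q,f_2^q)+I^{2q}\subseteq(f_1^q,f_2^q)$ by the Brian\c{c}on--Skoda-type inclusion for pseudo-rational rings, and then $a\in(f_1,f_2)$ by the $F$-rationality of $R$. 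These two inputs --- integral closedness of $I$ in degree $0$ and $I^{2q}\subseteq(f_1^q,f_2^q)$ in degree $1$ --- are precisely what is missing at the crucial step of your sketch; to salvage your route you would have to reproduce this degree-$0$/degree-$1$ computation inside $\homology^3_\frakN(\Rees')$ and, separately, actually prove the $F$-rationality of $\strSh_{X,x}$ for $x$ on the exceptional divisor.
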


\begin{remark}
\label{remark:ReesCMinDimTwo}
By~\cite[Example~(3)]{LipCMgraded94}, we know that $\Rees$ is
Cohen-Macaulay. Hence $G$ and $\Rees'$ are Cohen-Macaulay.
\end{remark}

\begin{proof}[Proof of Theorem~\protect{\ref{theorem:dimtwo}}]
We claim that the ideal 
$(f_1t, f_2t, t^{-1})$ of $\Rees'$ is tightly closed. Assume the claim.
Then, by~\cite[Proposition~4.14]{HochsterHunekeTCInvThyBSThm90}, 
$(f_1t, f_2t, t^{-1})\Rees'_{\frakN}$ is tightly closed. Since 
$\Rees'_{\frakN}$ is Cohen-Macaulay, it is
$F$-rational~\cite[Theorem~4.2]{HochsterHunekeFregSmBsch94}. 
Discussion~\ref{discussionbox:reesExtRees} and 
Proposition~\ref{proposition:TCReesExtRees} apply to $\Rees$, so 
$\Rees_\frakM$ is $F$-rational.
By~\cite[Lemma~1.12]{HaraWatanabeYoshidaFrationality2002}, $\Rees$ is
$F$-rational.

Now to prove the claim, consider an element 
$at^k \in (f_1t, f_2t, t^{-1})^*$, with $a \in I^k$. (It suffices to
consider homogeneous elements.)
If $k < 0$, then $at^k \in (t^{-1})$, so we may
assume that $k \geq 0$. 
If $k \geq 2$, then 
$I^k = JI^{k-1}$~\cite[Corollary~5.4]{LipTeiPseudoRatlSing81}, 
so $at^k \in (f_1t, f_2t)$. Therefore
we need to consider $k=0$ and $k=1$.
For every $0 \neq c \in I$, $\Rees'_c \simeq R_c[t,
t^{-1}]$ is $F$-rational, so we may pick a parameter test element $c$
from $I$.  If $k=0$, then for all sufficiently large $q = p^e$, 
there exist $\alpha_q, \beta_q \in R$ and $\gamma_q \in I^q$ such that
$ca^q  = \alpha_q t^{-q} f_1^q t^q + 
\beta_q t^{-q} f_2^q t^q + 
\gamma_q t^{q} t^{-q} \in (f_1^q, f_2^q) + I^q \subseteq I^q$. Since $I$
is integrally closed, $a \in I$. Since $I = (t^{-1})_0$, $a \in (t^{-1})$.
If $k = 1$, then for all sufficiently large $q = p^e$, 
there exist $\alpha_q, \beta_q \in R$ and $\gamma_q \in I^{2q}$ such
that $ca^q t^q = \alpha_q f_1^q t^q + 
\beta_q f_2^q t^q + 
\gamma_q t^{2q} t^{-q}$, so $ca^q 
\in (f_1^q, f_2^q) + I^{2q}
\subseteq (f_1^q, f_2^q)$, where the last inclusion follows from the
pseudo-rationality of $R$~\cite[Corollary~5.4]{LipTeiPseudoRatlSing81}. 
Since $R$ is $F$-rational, $a \in (f_1, f_2)$, so 
$at \in (f_1t, f_2t)$.
\end{proof}

\section{Theorem~\protect{\ref{theorem:rationalblowup}}}

\begin{notation}
Let $S = \bigoplus_{n \in \naturals}S_n$ be a graded ring with $S_0$ 
a local ring with maximal ideal $\frakn_0$. Write $\frakn$ for the
homogeneous maximal ideal $\frakn_0S + S_+$.
For a positive integer $N$, we write $S^{(N)} := \oplus_{n
\geq 0} S_{Nn}$ and $\frakn^{(N)}$ for the homogeneous maximal ideal of
$S^{(N)}$. 
\end{notation}

\begin{observationbox}
\label{observationbox:tcOfZeroVeronese}
Note that for integers $N>0$ and $i \geq 0$, 
\[
\homology^{i}_{\frakn^{(N)}}(S^{(N)}) = 
\bigoplus_{n \in \ints} 
\left(\homology^{i}_{\frakn}(S)\right)_{Nn}.
\]
Write $W  = 0^*_{\homology^{i}_{\frakn}(S)}$. It is a graded $S$-module.
Let $m \in \ints$ and $\xi \in W_m$. Then we may assume that there exists
$c \in S^{(m)}$ such that $c\xi^q = 0$ for every sufficiently large power
$q$ of $p$. Hence 
\begin{align*}
\bigoplus_{n \in \ints} W_{Nn} & \subseteq
0^*_{\homology^{i}_{\frakn^{(N)}}(S^{(N)})}.
\intertext{On the other hand, we can check directly that}
0^*_{\homology^{i}_{\frakn^{(N)}}(S^{(N)})} & \subseteq 
\bigoplus_{n \in \ints} W_{Nn},  
\intertext{so}
0^*_{\homology^{i}_{\frakn^{(N)}}(S^{(N)})}& =
\bigoplus_{n \in \ints} W_{Nn}. 
\qedhere
\end{align*}
\end{observationbox}

One of the ingredients of the proof of 
Theorem~\ref{theorem:rationalblowup}
is the following
proposition, which is analogous to the results of
E.~Hyry~\cite{HyryBlowupRingsRationalSings1999} relating the rationality
$\Proj \Rees_R(I)$ to that of $\Rees_R(I)$ in characteristic zero.
\begin{proposition}
\label{proposition:veroneseOfRees}
Let $(R, \frakm)$ be an excellent noetherian local normal
Cohen-Macaulay domain of prime characteristic and $I$ an $\frakm$-primary
$R$-ideal. Let $S$ be a graded $R$-algebra with 
${\Rees_{R}(I)} \subseteq S \subseteq \overline{\Rees_{R}(I)}$. 
Write $X = \Proj S$.  Suppose that $X$ is $F$-rational and that 
$\homology^i(X, \strSh_X) = 0$ for every $i \geq 1$.
Then for every integer $N \gg 0$, the subring $\oplus_{n \geq 0} S_{Nn}$ is
$F$-rational.
\end{proposition}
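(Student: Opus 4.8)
The plan is to reduce the $F$-rationality of the Veronese subring $S^{(N)} := \oplus_{n \geq 0} S_{Nn}$ to two things: (i) the vanishing of the tight closure of zero in its top local cohomology, and (ii) that $S^{(N)}$ is Cohen-Macaulay with the closed fibre behaving well. The natural first step is to invoke Observation~\ref{observationbox:tcOfZeroVeronese}: writing $W = 0^*_{\homology^{d+1}_{\frakn}(S)}$ (where $\frakn = \sqrt{\frakM S}$ is the homogeneous maximal ideal of $S$, by Lemma~\ref{lemma:puncRees}), we have
\[
0^*_{\homology^{d+1}_{\frakn^{(N)}}(S^{(N)})} = \bigoplus_{n \in \ints} W_{Nn}.
\]
So it suffices to show that $W_{Nn} = 0$ for all $n$, for $N \gg 0$. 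First I would argue that $W$ is concentrated in finitely many degrees, all of them negative: since $X = \Proj S$ is $F$-rational and $R$ is normal, Lemma~\ref{lemma:puncRees} gives that $\Spec S \minus \{\frakn\}$ is $F$-rational, and then Observation~\ref{observationbox:finiteLengthTCOfZero} makes $W$ a finite-length (hence finitely graded) $S$-module. The negativity of the relevant degrees follows from the discussion of top local cohomology: by the graded analogue of Discussion~\ref{discussionbox:HomologyOfRees} (valid for $\Rees \subseteq S \subseteq \overline{\Rees}$), $\homology^{d+1}_{\frakn}(S)_n = 0$ for every $n \geq 0$, so $W_n = 0$ for $n \geq 0$ automatically, and $W$ lives in a finite window of strictly negative degrees.

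Once $W$ is known to vanish outside a finite set of negative degrees, the choice of $N$ is forced: I would take $N$ strictly larger than the number of distinct negative degrees in which $W$ could be nonzero (equivalently, larger in absolute value than the most negative such degree). Then for every integer $n$, the degree $Nn$ either is $\geq 0$ (when $n \geq 0$) or has absolute value $\geq N$ (when $n < 0$), and in both cases lies outside the window where $W$ is supported. Hence $W_{Nn} = 0$ for all $n$, which gives $0^*_{\homology^{d+1}_{\frakn^{(N)}}(S^{(N)})} = 0$.

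It remains to deduce genuine $F$-rationality from the vanishing of this tight closure, which requires $S^{(N)}$ to be Cohen-Macaulay (so that $F$-rationality is equivalent to $0^*_{\homology^{\dim}_{\frakn^{(N)}}} = 0$, via~\cite[Theorem~4.2]{HochsterHunekeFregSmBsch94} and~\cite[Lemma~1.12]{HaraWatanabeYoshidaFrationality2002}). This is where the hypothesis $\homology^i(X, \strSh_X) = 0$ for $i \geq 1$ enters, and I expect it to be the main obstacle: for $i \geq 2$ one has $\homology^i_{S_+}(S)_j = \homology^{i-1}(X, (I\strSh_X)^j)$, and I would use the high-Veronese vanishing together with Serre vanishing (replacing $\strSh_X$ by twists $(I\strSh_X)^{Nn}$, which become acyclic for $N \gg 0$) to force $\homology^i_{\frakn^{(N)}}(S^{(N)})_n = 0$ for all $i < \dim S^{(N)}$ and all $n$, i.e.\ the Cohen-Macaulayness of $S^{(N)}$. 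Concretely, after passing to the Veronese the only potentially surviving intermediate local cohomology is controlled by $\homology^{i-1}(X, (I\strSh_X)^{Nn})$, and by enlarging $N$ one kills these for $n > 0$, while the hypothesis $\homology^{i}(X, \strSh_X) = 0$ handles $n = 0$ and the graded pieces for $n \geq 0$; the pieces in negative degree are handled by the same finite-support argument as above. Threading the single choice of $N$ through both the tight-closure vanishing and the Cohen-Macaulayness — so that one $N$ works simultaneously — is the delicate bookkeeping I would be most careful about.
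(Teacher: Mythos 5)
Your treatment of the tight-closure half coincides with the paper's: the paper packages it as Lemma~\ref{lemma:veroneseReesFiniteLengthTightClosureZero}, proved exactly as you propose by combining Lemma~\ref{lemma:puncRees}, Observation~\ref{observationbox:finiteLengthTCOfZero}, Observation~\ref{observationbox:tcOfZeroVeronese}, and the vanishing of $\homology^{d+1}_{\frakM}(S)_n$ for $n \geq 0$; your choice of $N$ is the same. Your reduction of global $F$-rationality to $F$-rationality at the homogeneous maximal ideal via the graded lemma of Hara--Watanabe--Yoshida is a legitimate mild variant of the paper's route, which instead applies Lemma~\ref{lemma:puncRees} to see that $\Spec S^{(N)} \minus \{\frakn^{(N)}\}$ is $F$-rational because $\Proj S^{(N)} = \Proj S$ is.

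The gap is in the Cohen--Macaulayness of $S^{(N)}$, which you correctly identify as the crux but do not prove. Your sketch controls $\homology^i_{S_+}(S)_j = \homology^{i-1}(X, (I\strSh_X)^j)$ by Serre vanishing together with the hypothesis $\homology^i(X, \strSh_X) = 0$; but Cohen--Macaulayness of $S^{(N)}$ at its vertex is a statement about $\homology^i_{\frakn^{(N)}}(S^{(N)})$, i.e.\ about the pieces $\homology^i_{\frakn}(S)_{Nn}$ with $\frakn = \sqrt{\frakM S}$, and these are not the $\homology^i_{S_+}$ pieces: passing from sheaf cohomology on $X$ to local cohomology at the vertex requires cohomology with supports in the closed fibre (a Sancho de Salas--type sequence), and this is precisely where the Cohen--Macaulayness of $R$ and duality enter. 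Likewise, your claim that the negative-degree pieces are ``handled by the same finite-support argument'' does not follow from what you established: the finite-length statement concerns $0^*$ inside the \emph{top} local cohomology (via $F$-rationality of the punctured spectrum), not the intermediate modules $\homology^i_{\frakn}(S)$ for $i \leq d$; for those one would have to argue separately that the punctured spectrum is Cohen--Macaulay and invoke local duality. The paper disposes of this entire step with a single citation of Theorem~(4.1) of \cite{LipCMgraded94}, which says exactly that for $\Rees \subseteq S \subseteq \overline{\Rees}$ with $X = \Proj S$ Cohen--Macaulay and $\homology^i(X, \strSh_X) = 0$ for all $i > 0$, the Veronese subrings $S^{(N)}$ are Cohen--Macaulay for all $N \gg 0$. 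That is the reference your argument is missing; reproving it from scratch is substantially more work than your sketch suggests.
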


\begin{lemma}
\label{lemma:veroneseReesFiniteLengthTightClosureZero}
With the hypothesis in Proposition~\ref{proposition:veroneseOfRees}, 
$0^*_{\homology^{d+1}_{\frakM}(S)}$ is a finite-length $S$-module.
In particular, for every integer $N \gg 0$, 
$0^*_{\homology^{d+1}_{\frakM^{(N)}}(S^{(N)})} = 0$.
\end{lemma}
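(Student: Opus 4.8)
The goal is to show that $0^*_{\homology^{d+1}_{\frakM}(S)}$ has finite length, and then to deduce the vanishing of the tight closure of zero in the top local cohomology of a high Veronese $S^{(N)}$. The plan is to run the same mechanism as in Observation~\ref{observationbox:finiteLengthTCOfZero}, but I must first check that its hypotheses are available. Here $S$ is a finitely generated algebra over the excellent local domain $R$, it is a domain (being squeezed between $\Rees$ and its normalization $\overline{\Rees}$, both of which are domains), and $\frakM$ is the homogeneous maximal ideal of $S$ in the sense of Lemma~\ref{lemma:puncRees} (so $\frakM = \sqrt{\frakM S}$ after identification). The key input I need is that the punctured spectrum $\Spec S \minus \{\frakM\}$ is $F$-rational. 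By Lemma~\ref{lemma:puncRees}, since $R$ is normal, this is equivalent to $\Proj S = X$ being $F$-rational, which holds by hypothesis. With the punctured spectrum $F$-rational, Observation~\ref{observationbox:finiteLengthTCOfZero} applies and tells me that for every $c \in \frakM$ some power $c^N$ is a parameter test element, so that $\sqrt{\tau_p(S_\frakM)} = \frakM$ or $\tau_p(S_\frakM) = S_\frakM$; in either case $\tau_p(S_\frakM)$ is $\frakM$-primary (or the unit ideal), and hence $0^*_{\homology^{d+1}_{\frakM}(S)} = 0^*_{\homology^{\dim S_\frakM}_{\frakM}(S_\frakM)}$ is a module of finite length.

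I should be careful on one point of bookkeeping: Observation~\ref{observationbox:finiteLengthTCOfZero} is stated for a finitely generated $R$-algebra that is a domain with a \emph{maximal} ideal $\frakn$, whereas $\frakM$ is only the \emph{homogeneous} maximal ideal of $S$, i.e.\ the irrelevant ideal, which is prime but generally of height $d+1 = \dim S$. The computation of $\homology^{d+1}_{\frakM}(S)$ and its tight closure is carried out after localizing at $\frakM$, and the argument of Observation~\ref{observationbox:finiteLengthTCOfZero}---which uses~\cite[Theorem~3.9]{VelezOpennessFrationalLoci1995} together with~\cite[Proposition~4.4]{SmithTestIdealsInLocalRings1995}---depends only on $\Spec S \minus \{\frakM\}$ being $F$-rational and on $S$ being an excellent domain finitely generated over $R$; it does not actually require $\frakM$ to be closed. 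So the conclusion that $0^*_{\homology^{d+1}_{\frakM}(S)}$ has finite length goes through verbatim.

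For the final assertion, I invoke Observation~\ref{observationbox:tcOfZeroVeronese} with $i = d+1$ and $\frakn = \frakM$: it gives
\[
0^*_{\homology^{d+1}_{\frakM^{(N)}}(S^{(N)})} = \bigoplus_{n \in \ints} W_{Nn}, \qquad W := 0^*_{\homology^{d+1}_{\frakM}(S)}.
\]
Since $W$ has finite length as an $S$-module, it is in particular a finitely generated graded module supported in finitely many degrees, so $W_{Nn} = 0$ for all $n \neq 0$ once $N$ exceeds the spread of the (finitely many) degrees in which $W$ is nonzero; and $W_0 = (0^*_{\homology^{d+1}_{\frakM}(S)})_0$. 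The one thing still to rule out is $W_0$: but by Discussion~\ref{discussionbox:HomologyOfRees}, $\homology^{d+1}_{\frakM}(S)_n = 0$ for every $n \geq 0$, so $W_0 \subseteq \homology^{d+1}_{\frakM}(S)_0 = 0$. Hence for all $N \gg 0$ the direct sum collapses to zero, giving $0^*_{\homology^{d+1}_{\frakM^{(N)}}(S^{(N)})} = 0$.

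The step I expect to require the most care is verifying that the finite-length machinery of Observation~\ref{observationbox:finiteLengthTCOfZero} legitimately applies to the irrelevant ideal $\frakM$ rather than a genuine maximal ideal, and keeping track of the identification of $\frakM$ with $\sqrt{\frakM S}$ via Lemma~\ref{lemma:puncRees}. The vanishing of $W$ in nonnegative degrees is essentially bookkeeping once Discussion~\ref{discussionbox:HomologyOfRees} is invoked (noting that its conclusion applies to any $S$ with $\Rees \subseteq S \subseteq \overline{\Rees}$), and the passage to the Veronese is purely formal via Observation~\ref{observationbox:tcOfZeroVeronese}.
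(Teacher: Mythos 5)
Your proposal is correct and follows essentially the same route as the paper: Lemma~\ref{lemma:puncRees} gives $F$-rationality of $\Spec S \minus \{\sqrt{\frakM S}\}$, Observation~\ref{observationbox:finiteLengthTCOfZero} gives the finite-length claim, and Observation~\ref{observationbox:tcOfZeroVeronese} together with the vanishing of $\homology^{d+1}_{\frakM}(S)_n$ for $n \geq 0$ gives the Veronese statement. Your worry about $\frakM$ not being a genuine maximal ideal is unnecessary---since $S_0 = R$ is local, $\sqrt{\frakM S} = \frakm S + S_+$ has residue ring $R/\frakm$, a field, so it is a maximal ideal of $S$ and Observation~\ref{observationbox:finiteLengthTCOfZero} applies verbatim.
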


\begin{proof}
From Lemma~\ref{lemma:puncRees}
we see that 
$\Spec S \minus \{\sqrt{\frakM S}\}$ is $F$-rational. Now apply
Observation~\ref{observationbox:finiteLengthTCOfZero} to prove 
the first assertion. The second assertion follows from this
and Observation~\ref{observationbox:tcOfZeroVeronese}, after noting that
$\homology^{d+1}_\frakM(S)_n = 0$ for every $n \geq 0$.
\end{proof}

\begin{proof}%
[Proof of Proposition~\protect{\ref{proposition:veroneseOfRees}}]
Write $\frakn = \sqrt{\frakM S}$.
For every integer $N \gg 0$, $S^{(N)}$ is Cohen-Macaulay
by~\cite[Theorem~(4.1)]{LipCMgraded94} and 
$0^*_{\homology^{d+1}_{\frakn^{(N)}}(S^{(N)})} = 0$
by Lemma~\ref{lemma:veroneseReesFiniteLengthTightClosureZero}.
Hence $\left(S^{(N)}\right)_{\frakn^{(N)}}$ is $F$-rational
by~\cite[Proposition 4.4(ii)]{SmithTestIdealsInLocalRings1995}.
On the other hand, 
by Lemma~\ref{lemma:puncRees},
$\Spec S^{(N)} \minus {\frakn^{(N)}}$ is $F$-rational
since $\Proj S^{(N)} = \Proj S$ is $F$-rational.
\end{proof}

We now prove some lemmas needed in the proof of 
Theorem~\ref{theorem:rationalblowup}. 
For the remainder of this section we assume the hypothesis of the theorem
and 
adopt the following notation: Write $X
= \Proj S$ and $f$ 
for the structure morphism $X \to \Spec R$.
Since $R$ is a rational singularity, it is
Cohen-Macaulay. Write $\omega_R$ for a dualizing module for $R$, which
exists since $R$ is of finite type over a field and a Cohen-Macaulay
ring. Write $\omega_X$ for the
left-most non-zero cohomology sheaf of the complex $f^!\omega_R$; since $X$
if $F$-rational (and, \textit{a fortiori}, Cohen-Macaulay), it is a
dualizing sheaf for $X$.

\begin{lemma}
\label{lemma:desingDominatesX}
There exists a desingularization $g : Y \to X$ such that 
$\homology^0(Y, \omega_Y) = \omega_R$ and
$\homology^i(Y, \omega_Y) = 
\homology^i(Y, \strSh_Y) = 0$ for every $i > 0$.
\end{lemma}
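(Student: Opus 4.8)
The plan is to produce $Y$ not by resolving $X$ directly, but by resolving $\Spec R$ in a way that factors through $X$, so that the cohomology can be read off over the affine base $\Spec R$ via Lemma~\ref{lemma:finiteSeqBlowupsRDer}. First I would fix a desingularization $h_0 : W_0 \to \Spec R$ that is projective over $\Spec R$; such resolutions exist in dimension three, and then $W_0$ is a quasi-projective regular variety over the ground field. Since $R$ is a rational singularity and, in dimension three, principalization is available (Proposition~\ref{proposition:principalization}), the argument indicated before Proposition~\ref{proposition:principalization} (domination of one resolution by another through blow-ups along regular centres, together with Lemma~\ref{lemma:finiteSeqBlowupsRDer}) shows that $h_0$ is good, i.e.\ $\RDer (h_0)_* \strSh_{W_0} \simeq \strSh_{\Spec R}$, equivalently (by~\cite[Lemma~(4.2)]{LipCMgraded94}) $\RDer (h_0)_* \omega_{W_0} \simeq \omega_R$, both concentrated in degree $0$.

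Next I would apply Proposition~\ref{proposition:principalization} to the ideal sheaf $I\strSh_{W_0}$, obtaining a composite $\mu : W_n \to W_0$ of blow-ups along regular centres with $I\strSh_{W_n}$ invertible; in particular every $W_i$, and hence $Y := W_n$, is regular. Invertibility of $I\strSh_{W_n}$ and the universal property of the blow-up $\Proj \Rees = \mathrm{Bl}_I(\Spec R)$ give a morphism $W_n \to \Proj \Rees$ over $\Spec R$; as $R$ is normal (rational singularities are normal), $\Proj \overline{\Rees}$ is the normalization of $\Proj \Rees$, and since $W_n$ is normal and the morphism is birational it factors through $\Proj \overline{\Rees}$. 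Composing with $\Proj \overline{\Rees} \to \Proj S = X$ (induced by $S \subseteq \overline{\Rees}$) yields $g : Y \to X$. Because $Y \to \Spec R$ and $X \to \Spec R$ are proper and $X$ is separated, $g$ is proper; it is birational with $Y$ regular, so $g$ is a desingularization.

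Then I would compute the cohomology over $\Spec R$. Since $X$ is Cohen--Macaulay, $f^!\omega_R$ is concentrated in a single degree, so $g^!\omega_X$ and $(fg)^!\omega_R$ have the same left-most non-zero cohomology; hence the sheaf $\omega_Y$ of the statement coincides with the one computed from the composite $\pi := fg = h_0 \circ \mu$. Lemma~\ref{lemma:finiteSeqBlowupsRDer} applied to $\mu$ gives $\RDer \mu_* \strSh_{W_n} \simeq \strSh_{W_0}$ and $\RDer \mu_* \omega_{W_n} \simeq \omega_{W_0}$, whence $\RDer \pi_* \strSh_Y \simeq \RDer (h_0)_* \strSh_{W_0} \simeq \strSh_{\Spec R}$ and $\RDer \pi_* \omega_Y \simeq \omega_R$, each concentrated in degree $0$. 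Because $\Spec R$ is affine, the Leray spectral sequence degenerates and gives $\homology^i(Y, \strSh_Y) = \homology^i(Y, \omega_Y) = 0$ for every $i > 0$, and $\homology^0(Y, \omega_Y) = \Gamma(\Spec R, \omega_R) = \omega_R$, as desired.

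The step I expect to be the main obstacle is the first one: securing a single $W_0$ that is simultaneously a \emph{good} resolution (so that $\RDer(h_0)_*\strSh_{W_0}\simeq \strSh_{\Spec R}$) and a quasi-projective regular variety to which Cossart--Piltant principalization applies. This is exactly where dimension three is essential, since the equivalence ``rational singularity $\Rightarrow$ every resolution is good'' is only available once principalization of ideal sheaves is in hand. Two further points demand care but are routine once the setup is fixed: the factorization of $W_n$ through $X$ via the universal properties of blow-up and of normalization, and the identification of the two constructions of $\omega_Y$ using the Cohen--Macaulayness of $X$.
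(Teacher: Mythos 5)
Your overall route is the same as the paper's: resolve $\Spec R$, principalize $I\strSh$ by further blow-ups along regular centres (Proposition~\ref{proposition:principalization}), use the invertibility of $I\strSh$ together with normality to factor the composite through $X$, and then read off $\homology^i(Y,\strSh_Y)$ and $\homology^i(Y,\omega_Y)$ over the affine base via Lemma~\ref{lemma:finiteSeqBlowupsRDer} and the composite (Leray) spectral sequence. Those later steps are all fine. The problem is your first step, which you yourself flag as the crux: you start from an \emph{arbitrary} projective desingularization $W_0 \to \Spec R$ and assert that rationality of $R$, principalization, and Lemma~\ref{lemma:finiteSeqBlowupsRDer} together force $W_0$ to be good. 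That implication is not established by these tools. The domination game never closes up: given a good resolution $Z$ and your $W_0$, principalizing on $Z$ produces $Z' \to Z$ (blow-ups along regular centres) factoring as $Z' \xrightarrow{\tau} W_0 \to \Spec R$ with $Z'$ good, and the unit maps then exhibit $\strSh_{\Spec R}$ only as a \emph{direct summand} of $\RDer(h_0)_*\strSh_{W_0}$ in the derived category; to get equality you would need $\RDer\tau_*\strSh_{Z'} = \strSh_{W_0}$, which is exactly the statement ``proper birational morphisms between regular threefolds have trivial higher direct images of $\strSh$'' — the very thing you are trying to prove. Iterating the domination in the other direction reproduces the same splitting and nothing more. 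In characteristic zero this independence-of-resolution statement is proved with Grauert--Riemenschneider/Hodge-theoretic input, and in characteristic $p$ it is a genuinely hard theorem (Chatzistamatiou--R\"ulling), not a formal consequence of principalization; the paper's parenthetical remark before Proposition~\ref{proposition:principalization} is only motivation and is never used in its proofs.

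The fix is to do what the paper does: take $\mu : Y \to \Spec R$ to be a desingularization for which $R \to \RDer\mu_*\strSh_Y$ is already a quasi-isomorphism (such a $\mu$ exists by the definition of rational singularity), apply Proposition~\ref{proposition:principalization} to $I\strSh_Y$ to get $\mu' : Y' \to Y$ with $I\strSh_{Y'}$ invertible, and conclude $\homology^i(Y',\strSh_{Y'})=0$ for $i>0$ from Lemma~\ref{lemma:finiteSeqBlowupsRDer} applied to $\mu'$ plus the spectral sequence for $\mu\circ\mu'$. Then no claim about arbitrary resolutions is needed. Your observation that the good resolution must be quasi-projective for Cossart--Piltant to apply is a legitimate point that the paper passes over silently, but it should be handled by choosing the good resolution (quasi-)projective, not by the domination argument above. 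The remainder of your write-up — the factorization of $Y' \to \Spec R$ through $X$ via the universal properties of the blow-up and of normalization, the identification of $\omega_Y$ from the composite, and the degeneration of Leray over the affine base — is correct and agrees with the paper.
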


\begin{proof}
Since $R$ is normal, $\homology^0(Y, \strSh_Y) = R$, so 
by ~\cite[Lemma~(4.2)]{LipCMgraded94}
the assertions
about $\omega_Y$ would follow from showing that 
$\homology^i(Y, \strSh_Y) = 0$ for every $i > 0$.
Since $R$ is a rational singularity, pick a desingularization $\mu : Y \to \Spec R$ such that the natural maps $R \to \RDer\mu_*\strSh_Y$ and
$\RDer\mu_*\mu^!\omega_R \to \omega_R$ are quasi-isomorphisms. By
Proposition~\ref{proposition:principalization}, 
we can find a proper birational morphism $\mu' : Y'
\to Y$ such that the ideal sheaf 
$I\strSh_{Y'}$ is invertible.
Now use Lemma~\ref{lemma:finiteSeqBlowupsRDer} to conclude 
that the map $\strSh_Y \to \RDer\mu'_* \strSh_{Y'}$ is a
quasi-isomorphism. Therefore 
the spectral sequence
\[
E_2^{i,j} = \RDer^j{\mu}_*\RDer^i{\mu'}_* \strSh_{Y'} \implies
\RDer^{i+j}({\mu}{\mu'})_*\strSh_{Y'},
\]
helps us conclude that 
$\homology^i(Y', \strSh_{Y'}) = 0$ for every $i > 0$.

Since $Y'$ is normal and 
$I\strSh_{Y'}$ is invertible the map 
$(\mu\mu') : Y' \to \Spec R$ factors as $fg$ for some $g : Y' \to X$.
Relabel $Y'$ as $Y$.
\end{proof}

\begin{lemma}
\label{lemma:higherDirImageZeroDiml}
Let $g : Y \to X$ be as in Lemma~\ref {lemma:desingDominatesX}. 
Then
\begin{asparaenum}
\item 
\label{lemma:higherDirImageZeroDiml:directImage}
The maps $\strSh_X \to g_* \strSh_Y$ and $g_* \omega_{Y} \to \omega_X$ are
isomorphisms. In particular, $\homology^0(X, \strSh_X) = R$ and 
$\homology^0(X, \omega_X) = \omega_R$.

\item 
\label{lemma:higherDirImageZeroDiml:higherdirectImage}
For every $i \geq 1$, 
$\RDer^ig_* \strSh_{Y}$ and $\RDer^ig_* \omega_{Y}$ have
zero-dimensional support, if they are non-zero.
\end{asparaenum}
\end{lemma}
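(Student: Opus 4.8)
The plan is to dispatch part~\eqref{lemma:higherDirImageZeroDiml:directImage} by soft normality/duality arguments and to concentrate the real work on part~\eqref{lemma:higherDirImageZeroDiml:higherdirectImage}, which I will reduce to the two-dimensional vanishing recorded in Remark~\ref{remarkbox:twodimpseudorational} by localizing at the generic points of the higher direct image sheaves.

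For part~\eqref{lemma:higherDirImageZeroDiml:directImage}: since $X$ is $F$-rational it is Cohen-Macaulay and normal, and $g$ is proper and birational with $Y$ normal, so $g_*\strSh_Y=\strSh_X$ (Zariski's main theorem). Then $\homology^0(X,\strSh_X)=\homology^0(Y,\strSh_Y)=R$, the last equality from the normality of $R$ (equivalently $f_*\strSh_X=\strSh_{\Spec R}$). For the trace map $g_*\omega_Y\to\omega_X$ I would use that $X$, being $F$-rational, is pseudo-rational, and that for a normal Cohen-Macaulay pseudo-rational $X$ this map is an isomorphism: it is always injective, and its surjectivity is, by local duality, exactly the injectivity in the definition of pseudo-rationality. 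Granting this, $\homology^0(X,\omega_X)=\homology^0(Y,\omega_Y)=\omega_R$ by Lemma~\ref{lemma:desingDominatesX}. Alternatively $g_*\omega_Y=\omega_X$ can be read off from Grothendieck duality $\RDer g_*\omega_Y\cong\RDer\mathcal{H}om_X(\RDer g_*\strSh_Y,\omega_X)$ once part~\eqref{lemma:higherDirImageZeroDiml:higherdirectImage} is known, since the positive-degree cohomology of $\RDer g_*\strSh_Y$ then has zero-dimensional support and so contributes only to $\mathcal{E}xt^3_X$, leaving $\homology^0=\mathcal{H}om_X(\strSh_X,\omega_X)=\omega_X$.

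For part~\eqref{lemma:higherDirImageZeroDiml:higherdirectImage}, I claim that for $i\geq 1$ the coherent sheaves $\RDer^ig_*\strSh_Y$ and $\RDer^ig_*\omega_Y$ vanish at every non-closed point of the three-dimensional $X$; since non-closed points are exactly those of codimension at most two, this forces zero-dimensional support. Let $z$ be the generic point of a positive-dimensional component of one of these supports, and consider $\dim\strSh_{X,z}\in\{0,1,2\}$. Because $g$ is proper and birational and $X$ is normal, $g$ is an isomorphism away from a closed set of codimension at least two; hence if $\dim\strSh_{X,z}\leq 1$ the point $z$ lies in the isomorphism locus, where $\strSh_Y$ and $\omega_Y$ are pulled back from $\strSh_X$ and $\omega_X$ and all higher direct images vanish. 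The remaining, decisive case is $\dim\strSh_{X,z}=2$: then $\strSh_{X,z}$ is a two-dimensional $F$-rational, hence pseudo-rational, excellent local ring, and localizing $g$ yields a proper birational morphism $g_z\colon Y_z:=Y\times_X\Spec\strSh_{X,z}\to\Spec\strSh_{X,z}$ from the normal scheme $Y_z$. By flat base change $(\RDer^ig_*\strSh_Y)_z=\homology^i(Y_z,\strSh_{Y_z})$ and $(\RDer^ig_*\omega_Y)_z=\homology^i(Y_z,\omega_{Y_z})$; these vanish for $i\geq 2$ because the fibres of $g_z$ are at most one-dimensional, and for $i=1$ by Remark~\ref{remarkbox:twodimpseudorational} applied to $\strSh_{X,z}$. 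Thus $z$ is not in the support, and only closed points survive.

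The main obstacle is this last case, and two technical points there deserve care. First, I must identify $\omega_Y|_{Y_z}$ with a dualizing sheaf $\omega_{Y_z}$ of $Y_z$; this follows because $\omega_Y=\homology^0((fg)^!\omega_R)=\homology^0(g^!\omega_X)$ and dualizing complexes are compatible with the flat localization $\Spec\strSh_{X,z}\to X$. Second, I use the standard fact that a proper birational morphism onto a normal variety is an isomorphism in codimension one, which is what removes the codimension-$\leq 1$ points. The rational-singularity hypothesis on $R$ and Lemma~\ref{lemma:desingDominatesX} enter only to guarantee the existence of $g$ and the vanishing $\homology^{>0}(Y,\strSh_Y)=\homology^{>0}(Y,\omega_Y)=0$ used in part~\eqref{lemma:higherDirImageZeroDiml:directImage}; the $F$-rationality of $X$ is used precisely at the two-dimensional points, through Remark~\ref{remarkbox:twodimpseudorational}.
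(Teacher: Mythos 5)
Your proof is correct and follows essentially the same route as the paper: part~\eqref{lemma:higherDirImageZeroDiml:directImage} via normality of $X$ and the Lipman--Teissier characterization of pseudo-rationality (which the paper simply cites), and part~\eqref{lemma:higherDirImageZeroDiml:higherdirectImage} by localizing at codimension-two points, using flat base change (with Verdier's base-change theorem for $\omega_Y$) and Remark~\ref{remarkbox:twodimpseudorational} applied to the two-dimensional pseudo-rational local rings of $X$. Your separate treatment of codimension~$\leq 1$ points is harmless but not strictly needed, since any positive-dimensional component of the closed support would already contain a codimension-two point.
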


\begin{proof}

\underline{\eqref{lemma:higherDirImageZeroDiml:directImage}}:
The map $\strSh_X \to g_* \strSh_Y$ is an isomorphism since $X$ is normal.
That the map $g_* \omega_{Y} \to \omega_X$ is an isomorphism 
follows from the characterization of pseudo-rational
rings in~\cite[Corollary (of (iii)), p.~107]{LipTeiPseudoRatlSing81}, after
noting that $X$ is  
pseudo-rational~\cite[Theorem~3.1]{SmithFrational1997}. The remaining
assertions follow from the above isomorphisms and
Lemma~\ref{lemma:desingDominatesX}.

\underline{\eqref{lemma:higherDirImageZeroDiml:higherdirectImage}}:
Let $x \in X$ be a point with $\dim \strSh_{X,x} = 2$. 
Write $Y_1 = Y \times_X \Spec \strSh_{X,x}$ and consider the
cartesian square
\[
\xymatrix{
Y_1  \ar[r] \ar[d]_{g_1} & Y \ar[d]_{g} \\
\Spec \strSh_{X,x} \ar[r] & X
}
\]
Then $g_1 : Y_1 \to \Spec \strSh_{X,x}$ is a desingularization of the
two-dimensional pseudo-rational ring $\strSh_{X,x}$; by
Remark~\ref{remarkbox:twodimpseudorational},
$\RDer^1{g_1}_* \strSh_{Y_1} = \RDer^1{g_1}_* \omega_{Y_1} = 0$.
Since $\dim \strSh_{X,x} = 2$, 
$\RDer^i{g_1}_* \strSh_{Y_1} = \RDer^i{g_1}_* \omega_{Y_1} = 0$
for every $i \geq 2$. By flat base-change
$(\RDer^ig_* \strSh_{Y})_x = 0= (\RDer^ig_* \omega_{Y})_x$, for every $i
\geq 1$. 
(Since $\omega_Y$ is defined using $g^!$, we need to 
use~\cite[Theorem~2, p.~394]{VerdierBaseChangeTwistedInverseImage69}
for the second equality.) Hence these sheaves have zero-dimensional
support, if they are non-zero.
\end{proof}

\begin{proof}[Proof of Theorem~\ref{theorem:rationalblowup}]
In view of Proposition~\ref{proposition:veroneseOfRees}, it suffices to
show that $\homology^i(X, \strSh_X) = 0$ for every $i > 0$.
Let $g : Y \to X$ be as in Lemma~\ref{lemma:desingDominatesX}.
Write $h = fg$.
Let $\calF$ be $\strSh_Y$ or $\omega_Y$. 
Then in the spectral sequence
\[
E_2^{i,j} = \homology^j(X, \RDer^ig_* \calF)
=
\RDer^jf_*\RDer^ig_* \calF \implies
\RDer^{i+j}h_*\calF = \homology^{i+j}(Y, \calF),
\]
$E_2^{i,j} = 0$ if $i>0$ and $j>0$
by
Lemma~\ref{lemma:higherDirImageZeroDiml}\eqref{lemma:higherDirImageZeroDiml:higherdirectImage}.
This gives an exact sequence
{\small
\[
0 \rightarrow 
\homology^1(X, g_*\calF) \rightarrow
\homology^1(Y, \calF) \rightarrow
\homology^0(X, \RDer^1g_* \calF) \rightarrow 
\homology^2(X, g_*\calF) \rightarrow
\homology^2(Y, \calF)\rightarrow
\homology^0(X, \RDer^2g_*\calF) \rightarrow 0.
\]}
Use Lemma~\ref{lemma:desingDominatesX} to conclude that 
$\homology^1(X, g_*\calF) = 0$, 
that the map 
$\homology^0(X, \RDer^1g_* \calF) \to \homology^2(X, g_*\calF)$ 
is an isomorphism, and (since $\RDer^2g_*\calF$ has zero-dimensional
support) that $\RDer^2g_*\calF = 0$.

We now have the following situation: 
$\homology^0(X, \strSh_X) = R$, 
$\homology^1(X, \strSh_X) = 0$, 
$\homology^0(X, \omega_X) = \omega_R$  and
$\homology^1(X, \omega_X) = 0$. Note, also, that 
$\homology^2(X, \strSh_X)$ is a finite-length $R$-module.
We need to show that $\homology^2(X, \strSh_X)=0$.

We apply duality for proper 
morphisms~\cite[Chapter VII,~Theorem~3.3, p.~379]{HartRD66} to $f$ to see
that
\[
\RDer f_* \omega_X \simeq \RDer \Hom_R(\RDer f_*\strSh_X, \omega_R).
\]
The right-side can be computed using a second-quadrant 
spectral sequence with 
\[
E_2^{-i,j} = \Ext_R^j(\homology^i(X, \strSh_X), \omega_R) \implies
\homology^{i+j}(X,\omega_X). 
\]
Since the only possibly non-zero terms are $E_2^{0,0} =\omega_R$ 
and $E_2^{-2,3} = \Ext_R^3(\homology^2(X, \strSh_X), \omega_R)$, we
get an exact sequence (with $\alpha$ coming from the $E_3$-page)
\[
0 \to \homology^0(X, \omega_X) \to
\omega_R \stackrel{\alpha}{\to} 
\Ext_R^3(\homology^2(X, \strSh_X), \omega_R)
\to \homology^1(X, \omega_X) \to 0
\]
Since $\homology^0(X, \omega_X) \simeq \omega_R$, we conclude that $\alpha
= 0$, so
$\Ext_R^3(\homology^2(X, \strSh_X), \omega_R) \simeq 
\homology^1(X, \omega_X)$.
Hence $\homology^2(X, \strSh_X) = 0$.
\end{proof}

The above proof also gives the following proposition:

\begin{proposition}
$X$ has rational singularities.
\end{proposition}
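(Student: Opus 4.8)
The plan is to verify Definition~\ref{definition:rationalsing} directly for the desingularization $g : Y \to X$ produced in Lemma~\ref{lemma:desingDominatesX}: I must check that $g_*\strSh_Y = \strSh_X$ and that $\RDer^i g_*\strSh_Y = 0$ for every $i > 0$. The first equality is already Lemma~\ref{lemma:higherDirImageZeroDiml}\eqref{lemma:higherDirImageZeroDiml:directImage}, so all of the work lies in the higher direct images, and in fact these have effectively been analyzed in the proof of Theorem~\ref{theorem:rationalblowup} upon taking $\calF = \strSh_Y$.

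First I would reduce to the cases $i = 1$ and $i = 2$. Since $g$ is a proper birational morphism onto the three-dimensional $X$ and $Y$ is three-dimensional, every fiber of $g$ has dimension at most $2$, so $\RDer^i g_*\strSh_Y = 0$ for $i \geq 3$ by the bound on higher direct images in terms of fiber dimension. For $i \in \{1,2\}$, Lemma~\ref{lemma:higherDirImageZeroDiml}\eqref{lemma:higherDirImageZeroDiml:higherdirectImage} tells me that $\RDer^i g_*\strSh_Y$ has zero-dimensional support; a coherent sheaf with zero-dimensional support vanishes as soon as its module of global sections does, so it suffices to show $\homology^0(X, \RDer^i g_*\strSh_Y) = 0$ for $i = 1, 2$.

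To extract these vanishings I would specialize the exact sequence from the proof of Theorem~\ref{theorem:rationalblowup} to $\calF = \strSh_Y$, namely
\[
0 \to \homology^1(X, \strSh_X) \to \homology^1(Y, \strSh_Y) \to \homology^0(X, \RDer^1 g_*\strSh_Y) \to \homology^2(X, \strSh_X) \to \homology^2(Y, \strSh_Y) \to \homology^0(X, \RDer^2 g_*\strSh_Y) \to 0.
\]
By Lemma~\ref{lemma:desingDominatesX} one has $\homology^1(Y, \strSh_Y) = \homology^2(Y, \strSh_Y) = 0$, so the surjection onto $\homology^0(X, \RDer^2 g_*\strSh_Y)$ forces it to vanish, giving $\RDer^2 g_*\strSh_Y = 0$, while the sequence also yields an isomorphism $\homology^0(X, \RDer^1 g_*\strSh_Y) \cong \homology^2(X, \strSh_X)$. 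The proof of the theorem already established $\homology^2(X, \strSh_X) = 0$, whence $\homology^0(X, \RDer^1 g_*\strSh_Y) = 0$ and therefore $\RDer^1 g_*\strSh_Y = 0$. Combined with $g_*\strSh_Y = \strSh_X$, this shows that $\strSh_X \to \RDer g_*\strSh_Y$ is a quasi-isomorphism, so $X$ is a rational singularity.

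I do not expect a genuine obstacle here, because the essential content---the vanishing $\homology^2(X, \strSh_X) = 0$---is exactly what the duality-for-proper-morphisms argument in the proof of Theorem~\ref{theorem:rationalblowup} delivers. The only point requiring care is the passage from vanishing of global sections to vanishing of the sheaf itself, which is legitimate precisely because of the zero-dimensional support supplied by Lemma~\ref{lemma:higherDirImageZeroDiml}\eqref{lemma:higherDirImageZeroDiml:higherdirectImage}.
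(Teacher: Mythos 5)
Your proposal is correct and follows essentially the same route as the paper: both extract $\RDer^2 g_*\strSh_Y = 0$ and the isomorphism $\homology^0(X,\RDer^1 g_*\strSh_Y)\cong\homology^2(X,\strSh_X)$ from the Leray exact sequence already set up in the proof of Theorem~\ref{theorem:rationalblowup}, and then use the vanishing $\homology^2(X,\strSh_X)=0$ together with the zero-dimensionality of the support from Lemma~\ref{lemma:higherDirImageZeroDiml} to kill $\RDer^1 g_*\strSh_Y$. Your explicit handling of $g_*\strSh_Y=\strSh_X$ and of $i\geq 3$ via the fiber-dimension bound is a harmless elaboration of points the paper leaves implicit.
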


\begin{proof}
For $\calF = \strSh_Y$ or 
$\calF = \omega_Y$, we established the following:
\begin{inparaenum}
\item $\RDer^2g_*\calF = 0$;
\item $\RDer^1g_*\calF$ has zero-dimensional support, if it is non-zero;
\item $\homology^0(X, \RDer^1g_*\calF) = \homology^2(X, \strSh_X) = 0$.
\end{inparaenum}
Hence $\RDer^1g_*\calF=0$.
\end{proof}

\def\cfudot#1{\ifmmode\setbox7\hbox{$\accent"5E#1$}\else
  \setbox7\hbox{\accent"5E#1}\penalty 10000\relax\fi\raise 1\ht7
  \hbox{\raise.1ex\hbox to 1\wd7{\hss.\hss}}\penalty 10000 \hskip-1\wd7\penalty
  10000\box7}

\end{document}